\documentclass[11pt]{amsart}

\usepackage[margin=1in]{geometry}  % set the margins to 1in on all sides
\usepackage{graphicx}              % to include figures
\usepackage{amsmath}               % great math stuff
\usepackage{amsfonts}              % for blackboard bold, etc
\usepackage{amsthm}                % better theorem environments
\usepackage{color}
\usepackage{tabulary}
\usepackage{caption}
\usepackage{subcaption}
\usepackage{amssymb}
\usepackage{todonotes}
\theoremstyle{plain}
\usepackage{mathptmx}

% various theorems, numbered by section

\newtheorem{thm}{Theorem}[section]

\newtheorem{lem}[thm]{Lemma}

\newtheorem{prop}[thm]{Proposition}
\newtheorem{cor}[thm]{Corollary}
\newtheorem{conj}[thm]{Conjecture}
\newtheorem{defn}[thm]{Definition}
\newtheorem{rmk}[thm]{Remark}

\newtheorem{ques}[thm]{Question} 

\theoremstyle{definition}
\newtheorem{thmx}{Theorem}

\DeclareMathOperator{\PR}{PSL_2(\RR)}
\DeclareMathOperator{\PC}{PSL_2(\CC)}

\DeclareMathOperator{\Homeop}{Homeo^+}

\DeclareMathOperator{\Fix}{Fix}

\DeclareMathOperator{\Per}{Per}

  % for bolding symbols
      %
\newcommand{\ZZ}{\mathbb{Z}}      % for Integers
\newcommand{\RR}{\mathbb{R}}      % for Real numbers
\newcommand{\CC}{\mathbb{C}}      % for Real numbers
\newcommand{\DD}{\mathbb{D}}      % for Integers
\newcommand{\HH}{\mathbb{H}}      % for Integers
      % for the torus

\newcommand{\ie}{\emph{i.e.}}

\newcommand{\x}{\overline{x}}
\newcommand{\y}{\overline{y}}
\newcommand{\z}{\overline{z}}

\begin{document}

\title[Laminar groups, Tits alternatives, and convergence group actions on $S^2$]{On laminar groups, Tits alternatives, \\ and convergence group actions on $S^2$}

\author{Juan Alonso}
\address{Centro de Matem\'atica, Facultad de Ciencias, Universidad de la Rep\'ublica. Igu\'a 4225, Montevideo C.P. 11400, Uruguay.}
\email{juan@cmat.edu.uy}

\author{Hyungryul Baik} 
\address{%
		Department of Mathematical Sciences, KAIST\\
		291 Daehak-ro Yuseong-gu, Daejeon, 34141, South Korea 
}
\email{%
        hrbaik@kaist.ac.kr
}

\author{Eric Samperton}
\address{Department of Mathematics, \\ 
South Hall, Room 6607, University of California \\
Santa Barbara, CA 93106-3080}
\email{eric@math.ucsb.edu}

%%%%%%%%%%%%%%%
\begin{abstract}
Following previous work of the second author, we establish more
properties of groups of circle homeomorphisms which admit invariant
laminations.  
In this paper, we focus on a certain type of such groups-so-called
pseudo-fibered groups, and show that many 3-manifold groups are
examples of pseudo-fibered groups. 
We then prove that torsion-free pseudo-fibered groups satisfy a Tits
alternative. 
We conclude by proving that a purely hyperbolic pseudo-fibered group
acts on the 2-sphere as a convergence group. 
This leads to an interesting question if there are examples of
pseudo-fibered groups other than 3-manifold groups. 

%to a conjectural characterization of the fundamental group
%of a fibered hyperbolic 3-manifold as a pseudo-fibered group with some additional properties. 

\smallskip
\noindent \textbf{Keywords.} Tits alternative, laminations, circle homeomorphisms, Fuchsian groups, fibered 3-manifolds, pseudo-Anosov surface homeomorphism.

\smallskip
\noindent \textbf{MSC classes:} 20F65, 20H10, 37C85, 37E10, 57M60. 
\end{abstract}

\maketitle

%%%%%%%%%%%%%%%%%%
%%%%%%%%%%%%%%%%%%
\section{Introduction}\label{intro}
Thurston \cite{Thurston97} showed that if $M$ is an atoroidal
3-manifold admitting a taut foliation, then $\pi_1(M)$ acts faithfully
on $S^1$ with a pair of dense invariant laminations. The result was
generalized by Calegari-Dunfield \cite{CalDun03} and one can find a
complete treatment in \cite{Calebook}. Motivated by these results, the
second author studied groups acting faithfully on $S^1$ with
prescribed types and numbers of invariant laminations, in the process
giving a new characterization of Fuchsian groups
\cite{BaikFuchsian}. In the same paper, he asked if there exists a way
of characterizing fibered 3-manifold groups in a similar way. 

For a 3-manifold $M$ which fibers over the circle, one can construct a
natural action of $\pi_1(M)$ on the circle wth two
invariant laminations and this motivates the following definition.  
We call a subgroup $G$ of $\Homeop(S^1)$ \emph{pseudo-fibered}
if its action on $S^1$ admits two invariant, very full, loose
laminations with distinct endpoints.  
As we said, this includes a large class of examples coming from 3-manifolds which
fiber over the circle, and the purpose of this paper is to show that
pseudo-fibered groups in general have many nice properties. 

We emphasize that in this paper
all group actions on $S^1$ are considered up to conjugacy not
semi-conjugacy (compare \cite{Mann15}). 
This is because it is not clear if the notion of pseudo-fibering is invariant under semi-conjugacy. 
See Section \ref{sec:Prelim} for the precise definitions and relevant discussions.

In Section \ref{sec:Tits} we establish our first main result, a Tits alternative for torsion-free pseudo-fibered groups:

\begin{thmx}
\label{thm:main1}
Let $G$ be a torsion-free pseudo-fibered group. Each subgroup of $G$ either contains a non-abelian free subgroup or is virtually abelian. 
\end{thmx}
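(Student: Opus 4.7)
The plan is to establish a dichotomy for any subgroup $H \le G$ based on the dynamics of its elements relative to the two invariant laminations $\Lambda_1, \Lambda_2$: either I can find two elements whose joint dynamics admits a ping-pong table, producing a non-abelian free subgroup, or the dynamics are so constrained that $H$ stabilizes a simple configuration and must be virtually abelian via a one-dimensional Hölder-type argument.

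First, I would classify the non-identity elements of $G$. Since $G$ is torsion-free, any $g\in G$ either has irrational rotation number or has a nonempty fixed set on $S^1$. Using that $\Lambda_1,\Lambda_2$ are invariant, very full, and loose with distinct endpoints, I expect to show that no nontrivial element acts freely on $S^1$ (the two dense leaf systems obstruct minimal circle actions), and moreover that each non-identity $g$ has a "hyperbolic" north--south dynamics on $S^1$ driven by pairs of endpoints of leaves of $\Lambda_1$ and $\Lambda_2$: a single attracting fixed point, a single repelling fixed point, and each such fixed point is an endpoint of leaves from both laminations. The "distinct endpoints" hypothesis is what rules out the intermediate parabolic-type behavior, giving the clean hyperbolic classification I need for ping-pong.

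Second, given $H \le G$, I would look at the set $\Fix(H)\subset S^1$ of points fixed by every element of $H$. If there exist $g,h\in H$ whose attracting/repelling fixed point pairs are disjoint in $S^1$, I run ping-pong: by the north--south dynamics, sufficiently high powers $g^N$ and $h^N$ map the complement of a small neighborhood of the repeller of $g$ (resp.\ $h$) into a small neighborhood of the attractor of $g$ (resp.\ $h$), and these neighborhoods can be chosen pairwise disjoint in $S^1$, producing a free subgroup of rank $2$. If no two elements have disjoint fixed pairs but some pair overlaps nontrivially without coinciding, I would replace one element by a conjugate (or a commutator of high powers) to spread the fixed sets apart; the richness of the lamination together with the group action should guarantee such a conjugator exists unless all elements share a common fixed pair.

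Third, in the residual case where every nontrivial element of $H$ shares the same pair of fixed points $\{p,q\}$, the action of $H$ restricts to an action by homeomorphisms on the open interval $S^1\setminus\{p,q\}$ fixing the two components. Since $G$ is torsion-free, $H$ acts freely (away from the common fixed points) on each interval, so Hölder's theorem applies and $H$ is abelian. The case where the common fixed set is a single point is analogous, possibly after replacing $H$ by an index-$2$ subgroup.

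The main obstacle I anticipate is the intermediate case of the ping-pong step: ruling out subtle configurations where fixed pairs of distinct elements interleave on $S^1$ without being disjoint, in which a naive ping-pong fails. Overcoming this will require exploiting both laminations simultaneously, using one to find sufficient room on $S^1$ for the attracting/repelling neighborhoods of elements constructed from the other, and appealing crucially to the "very full" and "distinct endpoints" conditions to upgrade weak transversality of fixed sets into the true disjointness needed for ping-pong.
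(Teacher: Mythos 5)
Your overall architecture (a dichotomy on fixed/periodic sets, ping-pong in the disjoint case, H\"older in the common case) matches the paper's, but two of your key steps do not hold as stated. First, the classification in your opening step is wrong: it is \emph{not} true that every nontrivial element of a torsion-free pseudo-fibered group has a single attracting and a single repelling fixed point. The correct classification (Theorem \ref{thm:pAlike_elements}, imported from \cite{BaikFuchsian}) is that every element is either hyperbolic or \emph{pseudo-Anosov-like}; the latter may have a positive power with $2n$ alternating attracting/repelling fixed points for $n\geq 2$ (these are precisely the elements that distinguish pseudo-fibered groups from Fuchsian ones, e.g.\ lifts of pseudo-Anosov monodromies), and may themselves be elliptic, i.e.\ have \emph{no} fixed points at all, only periodic points. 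This is why the paper's dichotomy is phrased in terms of $\Per_g$ rather than $\Fix_g$, and why one must pass to suitable powers before running ping-pong. Your claim that the ``distinct endpoints'' hypothesis forces north--south dynamics is the opposite of the truth: the two laminations are exactly what produce the attracting and repelling \emph{polygons} of p-A-like elements.

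Second, and more seriously, the case you flag as ``the main obstacle'' --- fixed/periodic sets of two elements that overlap without coinciding --- is the entire content of the theorem, and your proposed fix (replace an element by a conjugate or a commutator to ``spread the fixed sets apart'') is not a proof and is not how the difficulty is resolved. The resolution is that this intermediate case is \emph{vacuous}: Theorem \ref{thm:dynamics} shows $\Per_g$ and $\Per_h$ are either equal or disjoint. Proving this requires genuinely different tools from what you describe: looseness of the laminations (if $h$ moved a vertex of the attracting polygon of $g$ adjacent to a shared fixed point $p$, the orbit $\{h^{\circ n}(l)\}$ of a boundary leaf would give infinitely many leaves through $p$, contradicting Lemma \ref{lem:characterizaion_of_loose}); Lemma \ref{lem:hyperbolic_elements}, which shows hyperbolic and p-A-like elements can never share a fixed point; and Solodov's theorem (Theorem \ref{thm:Navas}) applied on an interval to force the remaining fixed points to coincide. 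Without an argument of this kind your dichotomy does not close. A smaller gap: in the residual case the common periodic set is a finite set that $H$ may permute, so one must first pass to the finite-index subgroup fixing it pointwise before applying H\"older on each complementary interval; restricting attention to a common \emph{pair} $\{p,q\}$ is not sufficient.
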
 

This is proved in Section \ref{subsec:Tits} by studying how two elements of a (torsion-free) pseudo-fibered group interact with each other dynamically. In particular, we show a kind of dynamical alternative for elements of a pseudo-fibered group:

\begin{thmx}
\label{thm:dynamics}
Let $G$ be a torsion-free pseudo-fibered group and for $f \in G$, let
$\Per_f$ denote the set of all periodic points of $f$ on $S^1$. Then
for any $g, h \in G$, $\Per_g$ and $\Per_h$ are either equal or disjoint.
\end{thmx}

Theorem \ref{thm:main1} will follow from Theorem \ref{thm:dynamics} by applying the ping-pong lemma, and H\"{o}lder's theorem that a group acting faithfully and freely on $\RR$ is necessarily abelian.

In Section \ref{sec:purelyhyperbolic}, we study
pseudo-fibered groups with more structure, inspired by (quasi-)Fuchsian groups.
This part should be considered as part of Fenley's program which generalizes the
work of Cannon-Thurston \cite{CannonThurston} considerably from the viewpoint of pseudo-Anosov
flows (see \cite{Fenley12}).  

Our main result of Section \ref{sec:purelyhyperbolic} connects
the pseudo-fibered group action on $S^1$ with a convegence group
action on $S^2$. Note that a similar idea has been carried out in
Fenley's program (see \cite{Fenley12}, \cite{Fenley16}, and also compare \cite{Frankel12}, \cite{BarbotFenley15}). 
\begin{thmx} 
\label{thm:main2}
Let $G$ be a pseudo-fibered group which is purely hyperbolic.  Then $G$ acts on $S^2$ as a convergence group.
\end{thmx}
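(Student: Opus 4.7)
The plan is to construct a topological 2-sphere $S^2$ from the two invariant laminations, on which $G$ acts naturally, and then to verify the convergence property using the purely hyperbolic dynamics on $S^1$ combined with the rigidity provided by the laminations.

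For the construction, I would introduce an equivalence relation $\sim$ on $S^1$ (equivalently, a cellular decomposition of $\overline{D^2}$) generated by declaring $x \sim y$ whenever $x$ and $y$ are endpoints of a common leaf of $\Lambda_1$ or $\Lambda_2$, or ideal vertices of the same complementary polygonal gap of one of the laminations. In the analogous setting of a fibered hyperbolic 3-manifold with the stable and unstable laminations of its pseudo-Anosov monodromy, the quotient $S^1/\sim$ is the Cannon--Thurston 2-sphere. The hypotheses that both laminations are very full, loose, and share no endpoints should ensure that a Moore-type argument identifies $S^1/\sim$ with the topological 2-sphere in our more general setting; the $G$-invariance of $\Lambda_1$ and $\Lambda_2$ then yields a continuous $G$-action on this $S^2$.

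To verify the convergence property, let $(g_n)$ be a sequence of distinct elements of $G$. Since $G$ is purely hyperbolic, each nontrivial $g_n$ has a unique attracting fixed point $a_n^+ \in S^1$ and a unique repelling fixed point $a_n^-$. After extracting a subsequence, $a_n^\pm \to a^\pm$ in $S^1$; let $\bar{a}^\pm \in S^2$ denote the corresponding points. The goal is to show $g_n \to \bar{a}^+$ uniformly on compact subsets of $S^2 \setminus \{\bar{a}^-\}$, which is precisely the defining property for $G$ to act on $S^2$ as a convergence group.

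The main obstacle is upgrading the pointwise convergence (immediate from the north-south dynamics of each $g_n$ together with $a_n^\pm \to a^\pm$) to uniform convergence on compacta: along a sequence of distinct elements one must rule out the possibility that the strength of the dynamics degenerates. The two key ingredients should be (i) that $G$ permutes the countable collection of leaves of each lamination, so that distinctness of the $g_n$ forces a combinatorial ``translation length'' on the leaf-space of $\Lambda_1 \cup \Lambda_2$ to diverge along the sequence; and (ii) Theorem \ref{thm:dynamics}, which severely constrains how the periodic point sets $\Per_{g_n}$ can intersect. Since the leaves and gaps of the laminations furnish a canonical neighborhood basis at each point of $S^2$, these two ingredients should combine to give uniform contraction of compacta in $S^2 \setminus \{\bar{a}^-\}$ into any preassigned neighborhood of $\bar{a}^+$, completing the verification.
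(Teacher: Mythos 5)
Your construction of the $2$-sphere is the same as the paper's: apply Moore's theorem to the decomposition of the two laminated disks glued along $S^1$, obtaining a $G$-equivariant quotient map $\pi:S^1\to S^2$ (the Cannon--Thurston map). The gap is in the verification of the convergence property, and it is not a small one. First, the parenthetical claim that pointwise convergence $g_n(p)\to \bar a^+$ for $p\neq \bar a^-$ is ``immediate from the north--south dynamics of each $g_n$ together with $a_n^\pm\to a^\pm$'' is false: a sequence of distinct hyperbolic homeomorphisms whose fixed points converge can perfectly well converge to the identity (already in $\PR$, take a fixed axis and translation lengths tending to $0$), in which case $g_n(p)$ does not converge to $a^+$ at all. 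Ruling out exactly this degeneration is the entire content of the theorem, so it cannot be absorbed into a parenthesis. Second, your proposed remedy --- that distinctness of the $g_n$ forces a combinatorial translation length on the leaf space to diverge --- is asserted rather than argued, and it is not clearly true: by Lemma \ref{lem:same-periodics} the set of elements sharing a fixed pair is a virtually abelian subgroup that the paper cannot show is virtually cyclic (see the remarks in Section \ref{ss:remarks}), and an infinite sequence of distinct elements of such a subgroup need not displace leaves by increasing amounts. (Also note $\Lambda_1\cup\Lambda_2$ is not itself a lamination, since leaves of the two laminations may link, so ``the leaf-space of $\Lambda_1\cup\Lambda_2$'' needs care.)

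The paper's actual argument is structured quite differently and supplies the missing dynamical input. It argues by contradiction using the properly-discontinuous-on-triples formulation of convergence groups: if the property fails, one lifts the offending triples from $S^2$ to $S^1$ (Lemma \ref{lem:lifting_convergent_seq}), uses the Rainbow Lemma to produce leaves $L_x,L_y,L_z$ separating the three limit points, and then invokes Lemma \ref{lem:accumulation_point} --- whose proof rests on the rainbow argument of \cite{BaikFuchsian} --- to show that two of the target points $x',y'$ are the \emph{only} accumulation points of the fixed points of the difference sequence $(g_{i+1}\circ g_i^{-1})$; here pure hyperbolicity (exactly two fixed points per element) is used crucially. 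The contradiction then comes from tracking the images $g_i(L_z)$ of a separating leaf: since laminations are closed, their limit is either a leaf joining $x'$ to $y'$ (impossible because $\pi(x')\neq\pi(y')$ and $\pi$ collapses precisely the leaves), or the images degenerate to a single point, which forces one of $g_i(y_i)$ or $g_i(z_i)$ into arbitrarily small neighborhoods of $x'$, contradicting $x'\neq y'\neq z'$. To repair your proposal you would essentially have to reproduce this fixed-point accumulation mechanism; the ingredients you cite (Theorem \ref{thm:dynamics} and $G$-invariance of the leaf set) do not by themselves yield the uniform contraction you need.
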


For the definition of ``purely hyperbolic", see Section
\ref{sec:Prelim}. We expect that Theorem \ref{thm:main2} can be
strengthened.  
Indeed, when $G$ is a purely hyperbolic pseudo-fibered group, the
second author has previously conjectured that $G$ is a Fuchsian group,
hence acts on $S^1$ as a convergence group \cite{BaikFuchsian}. 
Recall that the work of many authors (e.g. \cite{Tukia88},
\cite{CassonJungreis94} and \cite{Gabai91}) 
shows that a group acts on $S^1$ as a convergence group if and only if it is topologically conjugate to a Fuchsian group. 

Theorem \ref{thm:main1}, Theorem \ref{thm:dynamics}, and Theorem
\ref{thm:main2} show that pseudo-fibered groups have
properties similar to those of fibered 3-manifold groups. On the other
hand, we provide a source of examples of pseudo-fibered groups which
are quite different from fibered 3-manifold groups in Section
\ref{sec:promotion}. More precisely, 
in Theorem \ref{thm:free-product} , we show that the free product of any two finite
cyclic groups is a pseudo-fibered group. In the context of
3-manifolds, this implies that
the fundamental group of the connected sum of any two lens spaces is
also pseudo-fibered (Corollary \ref{thm:consum-lensspace}). 

Given the results above and the result of \cite{BaikFuchsian}, we
propose the following 
\begin{conj}[Promotion of Pseudo-Fibering] 
\label{conj:promotion}
Let $G$ be a finitely-generated torsion-free pseudo-fibered group which does not split as a nontrivial free product. Then there are three possibilities. 
\begin{itemize}
\item[1.] $G$ is elementary, \ie virtually abelian.
\item[2.] $G$ is topologically conjugate to a M\"obius group action (as usual, we consider $\PR$ as a subgroup of $\Homeop(S^1)$).
\item[3.] $G$ is abstractly isomorphic to a closed hyperbolic 3-manifold group.
\end{itemize}
\end{conj}

A similar conjecture was made in \cite{BaikFuchsian} and the
difference is discussed in Section \ref{sec:promotion}. Here, $G$ is
said to be elementary if it is virtually abelian. This definition
makes sense due to Theorem \ref{thm:main1}, which asserts that every
non-elementary pseudo-fibered group contains a non-abelian free
subgroup. 

\subsection{Acknowledgements}
We thank David Cohen for asking the second author if the Tits
alternative holds for pseudo-fibered groups. We also wish to thank
Danny Calegari, Ursula Hamenst\"{a}dt, and Dawid Kielak for helpful
conversations. We greatly appreciate for the anonymous referee for the
valuable comments which improved the structure of the paper
significantly. 
The second author was partially supported by the ERC Grant Nb.\
10160104, and Samsung Science \& Technology Foundation grant no. SSTF-BA1702-01.The third author thanks Universit\"{a}t Bonn for
hospitality, during which time some of this work was completed.

%%%%%
%%%%%
\section{Preliminaries}
\label{sec:Prelim}
We briefly review and motivate several definitions regarding laminations on the circle.  Two pairs $(a, b)$ and $(c, d)$ of distinct points of the circle $S^1$ are said to be \emph{linked} if each connected component of $S^1 \setminus \{a, b\}$ contains precisely one of $c, d$. They are called \emph{unlinked} if they are not linked. Let $\mathcal{M}$ denote the set of all unordered pairs of two distinct points of $S^1$, i.e., $\mathcal{M} = (S^1 \times S^1 - \Delta) / (x, y) \sim (y,x)$ where $\Delta$ is the diagonal $\{ (x, x) : x \in S^1\}$. A \emph{lamination} of $S^1$ is a closed subset of $\mathcal{M}$ whose elements are pairwise unlinked. Given a lamination $\Lambda$, an element $(a, b)$ of $\Lambda$ is called a \emph{leaf}, and the points $a, b$ are called the \emph{endpoints} of the leaf $(a,b)$ (or just endpoints of $\Lambda$ if there is no possible confusion).  Two laminations have {\em distinct endpoints} if their sets of endpoints are disjoint.  A lamination $\Lambda$ is called \emph{dense} if the set of endpoints of $\Lambda$ is a dense subset of $S^1$. 

Any subgroup $G$ of $\Homeop(S^1)$ has an induced action on $\mathcal{M}$. We say that a lamination $\Lambda$ is \emph{$G$-invariant} if the $G$-action on $\mathcal{M}$ preserves $\Lambda$ set-wise.   A discrete subgroup $G$ of $\Homeop(S^1)$ is called \emph{laminar} if it admits a dense $G$-invariant lamination. 

Let $\DD$ denote the closed unit disk in $\mathbb{C}$ where the
interior is equipped with the Poincar\'e metric, i.e., $\DD =
\mathbb{H}^2 \cup \partial_\infty \mathbb{H}^2$ .
A lamination $\Lambda'$ of $\DD$ is a set of chords with disjoint
interiors such that there exists a lamination 
$\Lambda$ in $S^1 = \partial \DD$ where the chords in $\Lambda'$ can
be obtained by connecting the endpoints of the leaves of $\Lambda$. 

As noted in Construction 2.4 of \cite{Calebook}, the set of laminations on $S^1$
and the set of geodesic laminations of $\mathbb{H}^2$ are in
one-to-one correspondence up to isotopy relative to $S^1
= \partial_\infty \mathbb{H}^2$. Hence, we freely switch our viewpoint
between these two without further mentioning. 
A \emph{gap} of a lamination $\Lambda$ is the closure of a connected
component of $\mathbb{H}^2 \setminus \Lambda$ in $\mathbb{H}^2 \cup \partial_\infty \mathbb{H}$.  

We recall some key properties of laminations from \cite{BaikFuchsian}
(also compare \cite{Calegarinote}) .

\begin{defn} A lamination $\Lambda$ is said to be 
\begin{itemize}
\item \textbf{totally disconnected} if no open subset of the disk is foliated by $\Lambda$,
\item \textbf{very full} if each gap is a finite-sided ideal polygon in the disk, and
\item \textbf{loose} if no two leaves share an endpoint unless they are edges of the same (necessarily unique) gap.
\end{itemize} 
\end{defn} 

For every element $f$ of $\Homeop(S^1)$, let $\Fix_f \subset S^1$
denote the set of all fixed points of $f$.  Let $\Per_f$ denote the
set of all periodic points of $f$, where a point $p$ of $S^1$ is
\emph{periodic} for $f$ if the orbit of $p$ under $f$ is finite.  Thus
$\Fix_f \subset \Per_f$.  
A fixed point $p$ of the homeomorphism $f$ is \emph{attracting} if
there exists an interval $I \ni p$ containing no other fixed points
such that $f(I) \subsetneq I$. 
Similarly, a fixed point $q$ is \emph{repelling} if there exists an interval $J \ni q$ containing no other fixed points such that $f(J) \supsetneq J$.

We first give names to particular types of homeomorphisms of $S^1$ in
the following definition as in \cite{BaikFuchsian}. For the first
four types of homeomorphisms, compare \cite{Kovacevic99} where
M\"obius-like elliptic, M\"obius-like parabolic, M\"obius-like
hyperbolic, and M\"obius-like homeomorphisms are
defined. 
\begin{defn} 
\label{defn:elementclassification}
An element $f$ of $\Homeop(S^1)$ is said to be 
\begin{itemize}
\item \textbf{elliptic} if $f$ has no fixed points, 
\item \textbf{parabolic} if $f$ has a unique fixed point, 
\item \textbf{hyperbolic} if $f$ has two fixed points, one attracting and one repelling,
\item \textbf{M\"{o}bius-like} if $f$ is conjugate in $\Homeop(S^1)$ to an element of $\PR$, 
\item \textbf{pseudo-Anosov-like} or \textbf{p-A-like} if $f$ is not hyperbolic and some positive power $f^n$ has a positive, even number of fixed points alternating between attracting and repelling, and
\item \textbf{properly pseudo-Anosov-like} or \textbf{properly p-A-like} if $f$ is pseudo-Anosov-like and non-elliptic.  
%two leaves share an endpoint only if they are boundary leaves 
\end{itemize}
\end{defn}

Thus $f$ is p-A-like if and only if a positive power of $f$ is
properly p-A-like.  For a p-A-like homeomorphism $f \in \Homeop(S^1)$, 
the set of boundary leaves of the convex hull of the attracting fixed
points of a properly p-A-like power $f^n$ is called the 
\textbf{attracting polygon} of $f$.  Similarly, $f$ has a \textbf{repelling polygon}.

\begin{defn}
Let $\Lambda$ be a lamination. A leaf $l \in \Lambda$ is said to be
\textbf{visible} from a point $p \in S^1$ if one can connect $l$ to
$p$ by a geodesic of $\mathbb{H}^2$ (i.e., there exists a geodesic ray from a point
on $l$ to $p$) which does not intersect any leaf of $\Lambda$ in $\mathbb{H}^2$.
\end{defn}

Observe that if $p$ is an endpoint of a gap in a very full, loose
lamination, the set of leaves visible from $p$ is precisely the set of
edges of the gap. 

Now we outline some examples of laminar groups mentioned in the
introduction. Let $S$ be a closed hyperbolic surface, and
$\phi$ be a pseudo-Anosov homeomorphism of $S$. Let $M$ be the
mapping torus $S \times [0,1]/ (x,1) \sim (\phi(x),0)$. Then one can
construct a faithful action of $\pi_1(M)$ on $S^1$ in the following
way. Note $\pi_1(M)$ is isomorphic to $\pi_1(S) \rtimes \ZZ$. The deck
transformation action of $\pi_1(S)$ on $\HH^2$ extends continuously to
a faithful action on $\partial \HH^2$. Let $\widetilde{\phi}: \HH^2
\to \HH^2$ be a lift of $\phi$ to the universal cover of $S$.
Since $S$ has finite area, $\widetilde{\phi}$ is a quasi-isometry,
hence extends to a homeomorphism on $\partial \HH^2$. 
Considering this homeomorphism on $\partial \HH^2$ as a generator of $\ZZ$, this defines an action
$$\rho: \pi_1(M) = \pi_1(S) \rtimes \ZZ \to \Homeop(\partial \HH^2) = \Homeop(S^1).$$
Then $\rho(\pi_1(M))$ is laminar, since it fixes both the stable and unstable laminations of $\phi$.  In fact, $\rho$ is faithful.

\begin{defn}
A finitely generated laminar group $G$ is said to be \textbf{fibered}
if $G$ is topologically conjugate to $\rho(\pi_1(M))$ where $\rho$ and
$M$ are as in the previous paragraph, and the conjugacy takes the
$G$-invariant lamination to one of the invariant laminations of the
monodromy of $M$. 
\end{defn}

\begin{defn}
A finitely generated laminar group $G$ is said to be \textbf{pseudo-fibered} if it preserves a pair of very full loose invariant laminations $\Lambda_1, \Lambda_2$ with distinct endpoints, and each nontrivial element of $G$ has at most countably many fixed points in $S^1$. We also say $(G, \Lambda_1, \Lambda_2)$ is a pseudo-fibered triple. 
\end{defn}

Pseudo-fibered groups were first studied in \cite{BaikFuchsian}, although they had not yet been given a name.  

\begin{thm}[see Section 8 of \cite{BaikFuchsian}] 
\label{thm:pAlike_elements}
Let $(G,\Lambda_1,\Lambda_2)$ be a pseudo-fibered triple.  Let $g \in G$.  Then 
\begin{itemize}
\item[(1)] $g$ is either M\"{o}bius-like or pseudo-Anosov-like, and
\item[(2)] if $g$ is p-A-like, then for some $i,j= 1,2$ with $i \neq j$, $\Lambda_i$ contains the attracting polygon of $g$, and $\Lambda_j$ contains the repelling polygon of $g$.
\end{itemize}
\end{thm}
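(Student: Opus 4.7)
Plan: The approach is to combine a dynamical analysis of $g$ with the geometric restrictions imposed by the invariant laminations $\Lambda_1, \Lambda_2$. The torsion-free and pseudo-fibered hypotheses ensure $g$ has infinite order and $\Fix_{g^n}$ is at most countable for each $n \neq 0$; the laminations' very full, loose, and distinct-endpoints properties will be essential.

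For part (1), I would split on the rotation number $\rho(g)$. If $\rho(g)$ is irrational, then $\Per_g = \emptyset$ and $g$ is semi-conjugate to rotation by $\rho(g)$; to rule out the Denjoy case and conclude $g$ is M\"{o}bius-like, I would argue that a wandering interval together with the forced structure of leaves accumulating at its endpoints is incompatible with the finite-sided gap condition on $\Lambda_1$. If $\rho(g)$ is rational, replace $g$ by a power so $\Fix_g \neq \emptyset$ and classify each $p \in \Fix_g$ by its one-sided local dynamics. The heart of the argument is to rule out mixed fixed points (attracting on one side, repelling on the other) outside the parabolic case. Suppose $p$ is mixed with $g$ attracting on the right. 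Iterating any leaf $\ell \in \Lambda_1$ with an endpoint just to the right of $p$, its right endpoint converges to $p$, and by closedness of $\Lambda_1$ together with looseness, the limit leaf has $p$ as an endpoint. The same argument run in $\Lambda_2$ produces a leaf of $\Lambda_2$ with $p$ as an endpoint, contradicting that $\Lambda_1,\Lambda_2$ have distinct endpoints. With mixed points ruled out, orientation preservation of $g$ forces the fixed points to alternate between attracting-both-sides and repelling-both-sides in even number, giving either hyperbolic (two fixed points) or properly p-A-like dynamics.

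For part (2), assume $g^n$ is properly p-A-like with attracting and repelling fixed points $a_1, r_1, \ldots, a_k, r_k$ alternating in cyclic order on $S^1$. The basin of $a_i$ under $g^n$ is the arc $(r_{i-1}, r_i)$. Given any leaf $\ell \in \Lambda_1$ with endpoints in basins of distinct $a_i, a_j$, forward iteration $g^{nm}$ drives the endpoints to $a_i, a_j$, so by closedness of $\Lambda_1$ the limit leaf $(a_i, a_j) \in \Lambda_1$. Varying $\ell$ and, if necessary, falling back on $\Lambda_2$ for basins not connected by leaves of $\Lambda_1$, one realizes each boundary leaf of the attracting polygon in $\Lambda_1 \cup \Lambda_2$; the analogous argument with $g^{-n}$ handles the repelling polygon. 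The decisive observation is that the attracting and repelling polygons cannot coexist in a single lamination: the boundary leaves $(a_i, a_{i+1})$ and $(r_i, r_{i+1})$ are linked, since each arc of $S^1 \setminus \{a_i, a_{i+1}\}$ contains exactly one of $r_i, r_{i+1}$. Therefore the two polygons must lie in distinct laminations.

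The main obstacle is the mixed-fixed-point step in part (1): one must use looseness carefully to guarantee that the limit leaves produced by iteration are genuine non-degenerate leaves, and use very fullness to ensure a sufficient supply of leaves with endpoints that can be pushed to $p$ under iteration by $g$ in both $\Lambda_1$ and $\Lambda_2$. A secondary technical obstacle in part (2) is the degenerate situation in which $\Lambda_1$ contains no leaves with endpoints in distinct basins of $g$: here one must use the $g$-invariance of $\Lambda_1$ and the fact that $g$ fixes each $a_i$ to analyze the single gap of $\Lambda_1$ containing multiple basins and realize the needed boundary leaves of the attracting polygon as edges of that gap.
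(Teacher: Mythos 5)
This theorem is not proved in the present paper; it is imported from Section 8 of \cite{BaikFuchsian}, so there is no in-paper proof to compare against and your argument must stand on its own. Your overall strategy --- the rotation-number dichotomy, the Rainbow Lemma combined with forward iteration of leaves to rule out semi-stable fixed points, and iteration plus linking to locate the attracting and repelling polygons --- is the right one and matches the approach of the cited source. Part (2) is essentially correct, with one point you should make explicit: the linking of $(a_i,a_{i+1})$ with $(r_i,r_{i+1})$ only shows that \emph{one} attracting edge and \emph{one} repelling edge cannot share a lamination. To conclude that \emph{all} attracting edges land in a single $\Lambda_i$ (rather than being split between $\Lambda_1$ and $\Lambda_2$) you must chain these constraints: each attracting edge is linked with two repelling edges and vice versa, the resulting bipartite linking graph on the edges is connected, and a connected bipartite graph has a unique proper $2$-coloring.

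The genuine gap is in part (1), at the step ``with mixed points ruled out, orientation preservation forces the fixed points to alternate between attracting and repelling in even number.'' This presupposes that $\Fix_{g^k}$ is finite, equivalently that every fixed point is isolated. The definition of a pseudo-fibered triple only guarantees that $\Fix_{g^k}$ is at most countable, so a priori it could be an infinite closed set such as a convergent sequence together with its limit. At an accumulation point $p$ of $\Fix_{g^k}$ your trichotomy (attracting / repelling / mixed on each side) is not exhaustive: on a side from which $p$ is accumulated by fixed points there is no adjacent complementary interval, $p$ is none of the three types, and the leaf-iteration argument degenerates --- a rainbow leaf at $p$ iterates to a limit leaf whose endpoints are fixed points lying strictly on either side of $p$, so no leaf ending at $p$ is produced and no contradiction with the Rainbow Lemma or with distinctness of endpoint sets arises. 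An element with such a fixed-point set would be neither M\"obius-like nor p-A-like, so finiteness is part of the content of the theorem and requires its own argument. (Your side-by-side analysis does show that any fixed point isolated on at least one side is genuinely attracting or genuinely repelling, hence isolated on both sides; what is missing is the exclusion of fixed points accumulated from both sides.) A smaller soft spot: in the irrational rotation number case, the Denjoy possibility is dispatched only by a gesture; a clean route is to push a leaf forward under the semi-conjugacy to the irrational rotation, note that a nondegenerate chord cannot have a pairwise unlinked orbit under an irrational rotation, conclude that every leaf has both endpoints in the closure of a single wandering gap, and then observe that the complementary region of the lamination meeting the exceptional minimal set fails to be a finite-sided ideal polygon, contradicting very fullness.
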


Furthermore, \cite{BaikFuchsian} shows that if $G$ is torsion-free, then all M\"{o}bius-like elements are hyperbolic elements.

The following proposition justifies the term ``pseudo-fibered''. 

\begin{prop} 
\label{prop:pseudofiberedname} 
A fibered group $G$ is pseudo-fibered. 
\end{prop} 
\begin{proof} We only need to worry about the cardinality of the set
  of fixed points of each element. But this is not a problem due to Theorem 5.5 of
  \cite{casson1988automorphisms} which asserts that for a given
  pseudo-Anosov surface homeomorphism $h$, any lift of a strictly positive power of $h$ has finitely many fixed points on $\partial_\infty \mathbb{H}^2$, alternating between attracting and repelling. %Moreover, it preserves a pair of very full loose laminations obtained as the lifts of stable and unstable laminations of $h$.  
\end{proof}

In fact the proof of Theorem 5.5 in \cite{casson1988automorphisms}
(pp. 85-87) shows Theorem \ref{thm:pAlike_elements} in the case of
fibered groups. Any lift of a strictly positive power of $h$ falls
into one of the three cases. 
Case 1 and Case 2 correspond to properly pseudo-Anosov-like elements
and Case 3 corresponds to hyperbolic elements in the sense of
Definition \ref{defn:elementclassification}. 
In Case 1, the attracting repelling polygons of the p-A-like element
have 3 or more sides and in Case 2, those polygons are degenerate, 
i.e., there are exactly two attracting fixed points and two repelling fixed points.

We remark that the ``pseudo" in ``pseudo-fibered group" intentionally carries two different connotations.  The first, as in Theorem \ref{thm:pAlike_elements}, indicates that some elements are pseudo-Anosov-like.  The second, as in Conjecture \ref{conj:promotion}, indicates that pseudo-fibered groups are (conjecturally) not far from fibered groups.

In Section \ref{sec:purelyhyperbolic}, we study a special class of pseudo-fibered groups.

\begin{defn}
Let $G$ be a pseudo-fibered group. $G$ is called \textbf{purely hyperbolic} if it has no pseudo-Anosov-like elements.
%, and called \textbf{mixed} if it contains at least one hyperbolic element and at least one pseudo-Anosov-like element. 
\end{defn}

%\begin{defn} 
%\label{defn:tame}
%A (mixed) pseudo-fibered group $G$ is said to be \textbf{tame} if the following condition holds: for any two elements $g, h$ of $G$, 
%$ gh \mbox{ is hyperbolic if both } g, h \mbox{ are hyperbolic. }  $ 
%\end{defn}

Theorem \ref{thm:main2} says that a purely hyperbolic pseudo-fibered
group acts on the sphere as a convergence group. In general, a group
$G$ acting on a compactum $X$ is called a discrete convergence group
if the following holds: for any infinite sequence of distinct elements
$(g_i)$ of $G$, there exists a subsequence $(g_{i_j})$ of $(g_i)$ and
two points $a, b \in X$ not necessarily distinct such that $g_{i_j}$
converges to the constant map with value $a$ uniformly on every
compact subset of $X \setminus \{b\}$, and $g_{i_j}^{-1}$ converges to
the constant map with value $b$ uniformly on every compact subset of
$X \setminus \{a\}$. Since we only deal with discrete convergence
groups in this paper, we will omit the word discrete, and simply call
it a convergence group. 

As mentioned before, when $X$ is $S^1$, being a
convergence group is equivalent to being (conjugate to) a Fuchsian
group. This result is known as the Convergence Group Theorem
\cite{Tukia88,Gabai91,CassonJungreis94}, and the same statement for indiscrete
convergence groups was proved in \cite{Hinkkanen90}. 

\begin{rmk} There is a well-known equivalent definition of a convergence group action. $G$ acting on $X$ is called a convergence group if the diagonal action of $G$ on $X \times X \times X \setminus \Delta$ is properly discontinuous, where $\Delta$ is the set of triples of points of $X$ which are not all distinct. See, for instance, \cite{Tukia94}. If the diagonal action on the set of distinct triples is also cocompact, then $G$ is called a uniform convergence group. 
\end{rmk} 

\begin{rmk} When $X$ is $S^1$, it is easy to see that the uniform convergence in the definition of a convergence group can be replaced by the pointwise convergence.  However, it is important not to conflate the notions of ``uniform convergence group" and ``uniform convergence."
\end{rmk} 

\begin{rmk} When $X$ is $S^2$, the analogue of Convergence Group
  Theorem is not true, i.e., not every discrete convergence group action on $S^2$ comes from a Kleinian group. For instance, one can just start with a Fuchsian representation of a surface group into $\PC$, and quotient the lower hemisphere to a single point. On the other hand, it is a famous open problem whether or not all uniform convergence group actions on $S^2$ come from Kleinian groups.  %See Section \ref{sec:mixed} for a related discussion. 
\end{rmk}

Finally, the idea of a rainbow, first described in \cite{BaikFuchsian}, will be useful throughout this paper:

\begin{defn}
A lamination $\Lambda$ is said to have a \emph{rainbow} at a point $p \in S^1$ if there is a sequence of leaves $(l_i) = ((a_i, b_i))$ of $\Lambda$ such that $(a_i)$ and $(b_i)$ converge to $p$ from opposite sides.  Such a sequence $(l_i)$ is called a rainbow at $p$ in $\Lambda$. 
\end{defn}

A rainbow is a particularly nice way of approximating a point $p\in S^1$ by leaves of a lamination.  Clearly endpoints of leaves do not admit rainbows.  On the other hand, an observation we shall use later is that for a very full lamination $\Lambda$, these approximations exist for every point that is not an endpoint of a leaf:

\begin{lem}[Rainbow Lemma, Theorem 5.3 of \cite{BaikFuchsian}]
Let $\Lambda$ be a very full lamination of $S^1$.  Every point $p \in S^1$ is either an endpoint of a leaf of $\Lambda$, or there is a rainbow in $\Lambda$ at $p$.  These two possibilities are mutually exclusive. \qed
\end{lem}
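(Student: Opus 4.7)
The plan is to prove mutual exclusivity and existence separately. For mutual exclusivity, I use a linking argument. Suppose $p$ is both an endpoint of a leaf $l_0 = (p, q) \in \Lambda$ and admits a rainbow $(l_i) = ((a_i, b_i))$. For $i$ large, the rainbow condition places $a_i$ close to $p$ on one side and $b_i$ close to $p$ on the other, so $a_i$ and $b_i$ lie in distinct components of $S^1 \setminus \{p, q\}$. Hence $l_0$ and $l_i$ are linked, contradicting the unlinkedness of leaves of a lamination.

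For existence, fix a non-endpoint $p$ and assume for contradiction that no rainbow at $p$ exists. The first step is to use very fullness to conclude that leaf endpoints accumulate on $p$ from both sides. If no leaf endpoint lay in some arc $(p - \epsilon, p)$, then a small half-disk neighborhood of that arc in $\DD$ would lie entirely in a single component of $\DD \setminus \Lambda$; the closure of that component is a gap, which by very fullness is a finite-sided ideal polygon. Analyzing the vertex structure of this polygon (and combining with the symmetric analysis on the other side of $p$) forces $p$ itself to be a polygon vertex, hence an endpoint of a leaf, contradicting our hypothesis on $p$.

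With endpoints accumulating on both sides of $p$, pick a sequence of endpoints $a_n \to p$ from the left and a leaf $l_n = (a_n, c_n) \in \Lambda$ through each. Along a subsequence, $c_n \to c \in S^1$ by compactness. Since $\Lambda$ is closed in $\mathcal{M}$, if $c \neq p$ then $(p, c) \in \Lambda$, contradicting that $p$ is not an endpoint; hence $c_n \to p$. If $c_n \to p$ from the right, the sequence $(l_n)$ is the desired rainbow, contradicting our assumption.

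The main obstacle is the remaining case in which $c_n \to p$ from the left as well, so the family $(l_n)$ accumulates on $p$ only from the left without straddling $p$. Here one invokes very fullness a final time: by the first step, endpoints also accumulate on $p$ from the right, and applying the same closedness argument to a right-approaching sequence of endpoints either directly yields a rainbow, or produces a second family $l_n'$ with both endpoints approaching $p$ from the right. In the latter case, the region of $\DD$ near $p$ lying between the two infinite accumulating families would be a component of $\DD \setminus \Lambda$ whose closure is a gap bounded by infinitely many leaves, contradicting the finite-sided condition. Hence a rainbow at $p$ must exist.
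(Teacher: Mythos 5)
Your mutual-exclusivity argument is correct, and the first two steps of your existence argument are essentially sound. Very fullness does force leaf endpoints to accumulate at $p$ from both sides, though your justification is slightly off: there is in general no uniform half-disk neighborhood of the whole open arc $(p-\epsilon,p)$ avoiding all leaves, since leaves with endpoints outside the arc may accumulate onto the arc's endpoints. The clean statement is that each point of the open arc has such a neighborhood (because a chord whose endpoints both lie at angular distance at least $\rho$ from a boundary point stays at definite Euclidean distance from it), so the whole arc lies in the closure of a single complementary component, i.e.\ in one gap; a finite-sided ideal polygon meets $S^1$ in only its finitely many vertices, which already contradicts the arc being infinite. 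Your use of closedness of $\Lambda$ in $\mathcal{M}$ to force the second endpoints $c_n$ to converge to $p$ is also correct.

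The genuine gap is in the final case. Having produced families $l_n$ and $l_n'$ whose endpoints approach $p$ only from the left and only from the right, you assert that ``the region of $\DD$ near $p$ lying between the two infinite accumulating families would be a component of $\DD\setminus\Lambda$.'' Nothing you have established rules out other leaves of $\Lambda$ --- leaves not passing through your chosen endpoint sequences --- entering that region and subdividing it; in particular, leaves crossing the radial segment ending at $p$ arbitrarily close to $p$ would do exactly that. Handling such leaves is the heart of the proof, not a side case: a chord meeting that radial segment has one endpoint in each component of $S^1$ minus the diameter through $p$, and if such chords cross arbitrarily close to $p$, then after passing to a subsequence their endpoint pairs converge to a pair $\{a,b\}$ whose chord contains $p$; closedness of $\Lambda$ then either makes $p$ an endpoint (if $a\neq b$) or yields a rainbow (if $a=b=p$, since the two endpoints lie on opposite sides). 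The correct dichotomy is therefore: either leaves cross a terminal radial segment arbitrarily close to $p$ (rainbow, or $p$ is an endpoint), or some terminal radial segment avoids all leaves, lies in a single component, and then $p$ lies in the corresponding gap $P$; since $P$ is a finite-sided ideal polygon, $P\cap S^1$ is its vertex set, so $p$ is a vertex and hence a leaf endpoint --- contradiction. Note also that your stated contradiction (``a gap bounded by infinitely many leaves'') presupposes that the $l_n$, $l_n'$ are boundary leaves of that gap, which is likewise unjustified and unnecessary; the contradiction with very fullness is simply that $p$ would be a non-vertex point of $P\cap S^1$. You have all the needed ingredients (closedness of $\Lambda$ together with the no-rainbow hypothesis), but as written the final step applies them only to the two chosen families rather than to arbitrary leaves near $p$, and that is where the argument fails to close.
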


%%%%%%%
\subsection{Semi-conjugacy destroys pseudo-fiberedness}
\label{subsec:semiconjugacy}
We remark in this subsection that semi-conjugacy appears to be irrelevant to the study of pseudo-fibered groups.  This is to be expected in the context of our promotion of pseudo-fibering conjecture, since semi-conjugacy does not preserve convergence actions.  In particular, our Theorem \ref{thm:main1} should be seen as distinct from Margulis's Tits alternative for minimal subgroups of $\Homeop(S^1)$, a point we explain now.

A continuous surjective map $f : S^1 \to S^1 $ is said to be \textbf{monotone} if the preimage of each point is connected. For a group $G$, two actions $\rho$ and $\mu: G \to \Homeop(S^1)$ are said to be semi-conjugate (or $\rho$ is semi-conjugate to $\mu$) if there exists a monotone map $f$ such that $f \circ \rho = \mu \circ f$. For many aspects of the theory of groups of circle homeomorphisms, it is enough to consider the actions up to semi-conjugacy. 

A classical theorem of Poincar\'{e} says that every subgroup of
$\Homeop(S^1)$ either has a finite orbit or is semiconjugate to a
minimal action (meaning every orbit is dense).  Furthermore, a theorem
of Margulis says that subgroups of $\Homeop(S^1)$ that act minimally
either contain $F_2$ as a subgroup or are abelian \cite{Margulis}.
(For more details regarding both of these results, as well as a
general introduction to group actions on $S^1$, see \cite{Ghys01}.)

What we have shown is that for a pseudo-fibered group, even if either
there exists a finite orbit or the action is non-minimal, the Tits
alternative always holds. Hence, the scope of Theorem \ref{thm:main1}
is distinct from the Tits alternative of Margulis. 

However, it is still an interesting question to ask if the pseudo-fibered
groups can be studied up to semi-conjugacy. We observe that 
semi-conjugacy may destroy a pseudo-fibered triple, since the
laminations do not behave well under semi-conjugacy. 
More precisely, we show: 

\begin{prop}
\label{prop:monotone}
Let $f: S^1 \to S^1$ be a monotone map which is not a homeomorphism, and $(\Lambda_1, \Lambda_2)$ a pair of laminations. At most one of the pairs,  $(\Lambda_1, \Lambda_2)$ and $(f(\Lambda_1), f(\Lambda_2))$, can be a pair of very full loose laminations with disjoint endpoint sets. 
\end{prop}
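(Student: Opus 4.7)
The plan is to assume for contradiction that both $(\Lambda_1, \Lambda_2)$ and $(f(\Lambda_1), f(\Lambda_2))$ are pairs of very full, loose laminations with disjoint endpoint sets, and then to derive a contradiction by showing that the image $q$ of a collapsed arc acquires infinitely many incident leaves in $f(\Lambda_2)$. Since $f$ is monotone but not a homeomorphism, some point $q \in S^1$ has preimage $I := f^{-1}(q)$ equal to a non-degenerate closed arc; fix an endpoint $c$ of $I$. Because the endpoint sets of $\Lambda_1$ and $\Lambda_2$ are disjoint, $c$ is an endpoint of at most one of them, so after possibly swapping indices we may assume $c$ is not an endpoint of any leaf of $\Lambda_2$.

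By the Rainbow Lemma applied to $\Lambda_2$ at $c$, there is a sequence of leaves $(a_n, b_n) \in \Lambda_2$ with $a_n$ and $b_n$ converging to $c$ from opposite sides of $c$ in $S^1$. Since $c$ is a boundary point of $I$, one of these sides lies in $I^\circ$ and the other in $I^c$; relabel so that $a_n \in I^c$ and $b_n \in I^\circ$ for all sufficiently large $n$. Then $f(b_n) = q$ while $f(a_n) \neq q$, so each image pair $(q, f(a_n))$ is a leaf of $f(\Lambda_2)$ with $q$ as an endpoint.

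The decisive step is to verify that $\{f(a_n)\}$ is an infinite set. If instead some $q' \neq q$ were attained by $f(a_n)$ for infinitely many $n$, then infinitely many $a_n$ would lie in the closed connected preimage $f^{-1}(q')$; since $a_n \to c$ and $f^{-1}(q')$ is closed, this would force $c \in f^{-1}(q')$ and hence $f(c) = q' \neq q$, contradicting $c \in I$. Therefore $f(\Lambda_2)$ admits infinitely many pairwise distinct leaves incident to $q$. But looseness of $f(\Lambda_2)$ requires any collection of leaves sharing the endpoint $q$ to be edges of a single gap at $q$, and by very fullness this gap is a finite-sided ideal polygon in which $q$ is a vertex incident to exactly two edges. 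Hence at most two leaves of $f(\Lambda_2)$ can share the endpoint $q$, contradicting the infinite family produced above. The only subtle point in executing this plan is the argument that $\{f(a_n)\}$ is infinite, which rests on the maximality of $I$ as the preimage of $q$: no other collapse arc of $f$ can touch $c$ without forcing $f(c)$ to disagree with $q$.
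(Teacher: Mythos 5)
Your proof is correct and follows essentially the same route as the paper: apply the Rainbow Lemma at an endpoint of the collapsed arc $f^{-1}(q)$ (choosing the lamination not having that point as a leaf endpoint, using disjointness of endpoint sets), and observe that the image of the rainbow is an infinite family of leaves of $f(\Lambda_2)$ sharing the endpoint $q$, violating looseness. You also carefully justify a point the paper asserts without proof, namely that the set $\{f(a_n)\}$ is genuinely infinite; that is a welcome addition.
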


\begin{proof} Since $f$ is not injective, there is a point $p \in S^1$ such that $I := f^{-1}(p)$ has non-empty interior. Let $\hat{p}$ be an endpoint of $I$.  Recall that the Rainbow Lemma says that for each $p \in S^1$ and a very full lamination $\Lambda$, either $p$ is an endpoint of a leaf or there is a rainbow at $p$ in $\Lambda$. 

Suppose $(\Lambda_1, \Lambda_2)$ is a pair of very full loose laminations with disjoint endpoint sets. In particular, there must be a rainbow at $\hat{p}$ in $\Lambda_i$ for at least one of the $i=1$ or $2$. But the image of a rainbow at $\hat{p}$ under $f$ is an infinite set of leaves of $f(\Lambda_i)$ which share a common endpoint. Hence, $f(\Lambda_i)$ cannot be loose. 

For the other direction, suppose $(f(\Lambda_1), f(\Lambda_2))$ is a pair of very full loose laminations with disjoint endpoint sets. From the above argument, we know that there is no rainbow at $\hat{p}$ in $\Lambda_i$ for each $i$. But this means $\hat{p}$ is an endpoint of some leaf in both $\Lambda_1$ and $\Lambda_2$, hence they cannot have disjoint endpoint sets. 
\end{proof} 

There are examples of pseudo-fibered triples whose actions are not
minimal.  Indeed, it is easy to construct examples of pseudo-fibered
groups with finite orbits, and in the next section, we construct
examples of pseudo-fibered groups whose actions are neither minimal
nor have finite orbits. 
It would be interesting to more thoroughly unravel the relationship
between (non)minimal actions, finite orbits, and Conjecture
\ref{conj:promotion}. From this perspective, it is natural to ask
that: 
\begin{ques} Is a pseudo-fibereing semi-conjugacy invariant? Namely,
  for two semi-conjugate actions $\rho_1, \rho_2$ of a group $G$ on
  $S^1$, if $\rho_1$ is pseudo-fibered with two laminations
  $\Lambda_1, \Lambda_2$, is $\rho_2$ also pseudo-fibered with respect
  to a different pair of laminations $\Gamma_1, \Gamma_2$? 
\end{ques} 

Note that even if the above question has an affirmative answer, the
pairs $(\Lambda_1, \Lambda_2)$ and $(\Gamma_1, \Gamma_2)$ may not
be related in any obvious way as we saw in Proposition
\ref{prop:monotone}. 

%%%%%%%%%%%%%%%%%%%%%%%%%%%%%%%%%%%
%%%%%%%%%%%%%%%%%%%%%%%%%%%%%%%%%%%
\section{Free products, torsion, and promotion of pseudo-fibering}\label{sec:promotion}

Theorems \ref{thm:main1} and \ref{thm:main2} can be seen also as partial evidence
for Conjecture \ref{conj:promotion}. In \cite{BaikFuchsian}, a
conjecture similar to Conjecture \ref{conj:promotion} was made without
free indecomposability assumption. The following theorem shows
why Conjecture \ref{conj:promotion}. was amended. 

\begin{thm}
\label{thm:free-product} 
Let $G, H$ be any finite cyclic groups. Then $G*H$ embeds into $\Homeop(S^1)$ as a pseudo-fibered group.
\end{thm}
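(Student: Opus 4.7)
The plan is to exhibit $G*H$ as a subgroup of $\Homeop^+(S^1)$ via a ping-pong combination, together with the required pair of laminations; the degenerate case in which one factor is trivial is handled separately.

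First, if one of $G, H$ is trivial, then $G*H$ is itself finite cyclic of some order $k$, and I realize it by a fixed-point-free order-$k$ homeomorphism of $S^1$ (topologically a rigid rotation). I would construct an invariant very full loose lamination by centering a regular ideal $k$-gon at the origin of $\DD$ and subdividing each of the $k$ complementary exterior regions by a $\ZZ/k$-equivariant Farey-style cascade of ideal triangles; then every gap is either the central $k$-gon or one of these ideal triangles, so the lamination is very full, and looseness is clear. A second such lamination with disjoint endpoint set is built the same way after rotating by $\pi/k$ and using a disjoint choice of Farey vertices. Every nontrivial element is a nontrivial rotation with no fixed points on $S^1$, so the countable fixed-point condition is immediate.

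For the main case $G = \ZZ/n$, $H = \ZZ/m$ both nontrivial, I would pick disjoint closed arcs $X_G, X_H \subset S^1$ and construct fixed-point-free order-$n$ and order-$m$ homeomorphisms $g, h \in \Homeop^+(S^1)$ in ping-pong position:
\[
g^i(X_H) \subset \mathrm{int}(X_G) \text{ for } 1 \le i \le n-1, \qquad h^j(X_G) \subset \mathrm{int}(X_H) \text{ for } 1 \le j \le m-1.
\]
Such $g, h$ exist: they can be obtained by conjugating rigid rotations of orders $n, m$ by suitable distortion homeomorphisms so that the orbits of $X_H, X_G$ fit inside the prescribed arcs. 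By the classical ping-pong lemma, $\langle g, h\rangle \cong G*H$. Next I install the single-factor laminations $\Lambda_i^G$ (built as above for $g|_{X_G}$) inside $X_G$, and similarly $\Lambda_i^H \subset X_H$ for $h$, with endpoint sets chosen mutually disjoint and disjoint from the boundaries of $X_G, X_H$. Setting
\[
\Lambda_i := \overline{\bigcup_{\gamma \in G*H} \gamma\bigl(\Lambda_i^G \cup \Lambda_i^H\bigr)}, \quad i = 1, 2,
\]
yields a $G*H$-invariant lamination. The translates of $X_G, X_H$ form a self-similar Cantor pattern mirroring the Bass--Serre tree of $G*H$, so every gap of $\Lambda_i$ is either a single-factor gap (a finite-sided ideal polygon by construction) or a ``macroscopic'' gap between successive ping-pong translates, which I would subdivide further by an additional equivariant family of leaves to ensure finite-sided ideal-polygonal gaps. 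The endpoint sets of $\Lambda_1, \Lambda_2$ remain disjoint because the group action preserves the initial disjointness.

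Finally, the countable fixed-point condition follows from the ping-pong normal form: every nontrivial element of $G*H$ is either conjugate to a nontrivial power of $g$ or $h$ (elliptic, no fixed points on $S^1$) or corresponds to a reduced word of length at least two, whose nested intersections of ping-pong arcs produce exactly one attracting and one repelling fixed point as in classical Schottky dynamics. I expect the main obstacle to be the looseness of $\Lambda_i$: taking the closure in its definition could in principle create pairs of leaves sharing an endpoint at an accumulation point of the Cantor pattern. To control this, I would appeal to the Rainbow Lemma and arrange the single-factor constructions so that every such accumulation point becomes a rainbow point of $\Lambda_i$; the Rainbow Lemma then forbids such a point from being a leaf endpoint, thereby preserving looseness under closure.
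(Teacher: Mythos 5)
Your ping-pong set-up is a legitimate alternative to the paper's wedge-and-blow-up construction, but two steps fail as written. The first is the building block itself: a Farey-style cascade of ideal triangles is \emph{not} loose. In any arrangement where adjacent ideal triangles share edges, some vertex $p$ is an endpoint of at least three leaves lying on distinct gaps; by (the proof of) Lemma \ref{lem:characterizaion_of_loose}, a loose lamination has at most two leaves per endpoint and no two gaps may share a vertex. So the triangle gaps must be pairwise disjoint, accumulating on non-boundary leaves, rather than forming a tessellation --- this is what ``add infinitely many triangles to make the lamination very full and loose'' has to mean, and it infects your main case since the single-factor laminations are ``built as above.'' Relatedly, ``installing $\Lambda_i^G$ inside $X_G$'' is not well posed: with your ping-pong inclusions $g$ does not preserve $X_G$ (it only maps $X_H$ into $X_G$), so there is no $g$-invariant arc in which to place a $\langle g\rangle$-invariant lamination. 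The recursion must instead be organized around the orbit of the arcs $g^i(X_H)$, $h^j(X_G)$, which is exactly the structure the paper obtains by wedging two rotation circles, blowing up the orbit of the wedge point, and order-completing.

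The more serious gap is the ``at most countably many fixed points'' clause in the definition of pseudo-fibered. For a cyclically reduced word $w$ of length at least two, ping-pong gives $w(X)\subsetneq X$ for a suitable arc $X$, so $\Fix_w$ lies in nested intersections such as $\bigcap_k w^k(X)$; but for generators obtained by conjugating rotations by arbitrary ``distortion homeomorphisms'' these intersections need not be points. If $J=\bigcap_k w^k(X)$ is a nondegenerate arc then $w(J)=J$ and $w|_J$ can fix a Cantor set, which is uncountable. ``Classical Schottky dynamics'' (exactly one attracting and one repelling fixed point) is a feature of M\"obius maps, not a consequence of the ping-pong inclusions for homeomorphisms. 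The paper closes precisely this gap in one of two ways: by constructing a \emph{third} very full invariant lamination with distinct endpoints and invoking Theorem \ref{thm:mobius} to conclude every element is M\"obius-like, or by realizing $S^1$ as the order completion of a countable orbit so that fixed points of nontorsion elements are forced into the end space of the Bass--Serre tree, where a hyperbolic tree element fixes exactly two ends. You need one of these devices (or to conjugate your generators into an honest Fuchsian realization of $\ZZ/n * \ZZ/m$) before the countability condition is established; the Rainbow Lemma does not help here, as it constrains leaf endpoints, not fixed points.
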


\begin{proof}
This construction is adapted from a construction in \cite{BS15} that yields a faithful action of any free product of subgroups of $\Homeop(S^1)$ on a new circle, which blows down onto each of the original circles.  We content ourselves with a brief review of the ideas of \cite{BS15}, and a description of how to additionally construct invariant laminations.

\begin{figure}
\centering
\begin{minipage}{.35\textwidth}
  \centering
  \includegraphics[width=0.9\linewidth]{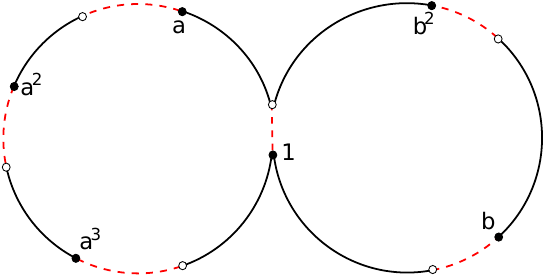}
  \captionof{figure}{The seed graph $\Gamma_0$.}
  \label{fig:seed}
\end{minipage}%
\begin{minipage}{.65\textwidth}
  \centering
  \includegraphics[width=.9\linewidth]{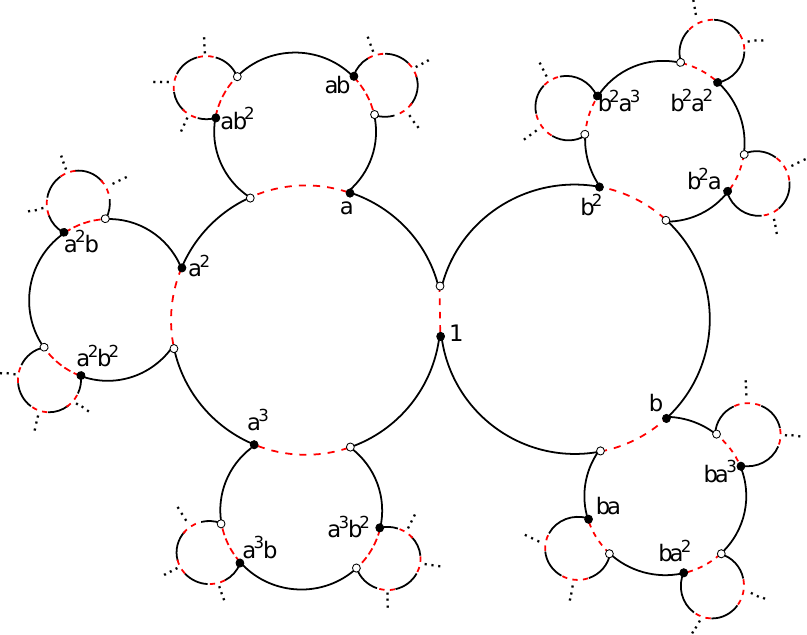}
  \captionof{figure}{The limiting circle $S^1=\overline{\Gamma_\infty}$ laminated by $\Lambda_0$, which is represented by the dashed red lines.}
  \label{fig:limit}
\end{minipage}
\end{figure}

To construct an action of $G * H$ on $S^1$, begin by forming two pointed copies of $S^1$ called $S^1_G$ and $S^1_H$, with marked points both denoted $1$.  Let $G$ act on $S^1_G$ as a finite rotation subgroup, and $H$ act on $S^1_H$ as a finite rotation subgroup.  The $G$-orbit of the point marked 1 in $S^1_G$ is now a copy of $G$, and the $H$-orbit of 1 in $S^1_H$ is a copy of $H$.   Mark all of these points accordingly, wedge $S^1_G$ and $S^1_H$ together at the points marked by 1, blow up all of the marked points, and consistenly label one of the endpoints of the blow-up intervals.  The resulting ``seed", called $\Gamma_0$, is in Figure \ref{fig:seed}, where we have $G=\langle a \mid a^4 = 1\rangle$ and $H = \langle b \mid b^3 = 1 \rangle$.  Now generate an infinite graph $\Gamma_\infty'$ on which $G*H$ acts faithfully.  As in Figure \ref{fig:limit}, write $\Gamma_\infty' = \Lambda_0 \cup \Gamma_\infty$ where $\Lambda_0$ is the orbit of the blown-up intervals in $\Gamma_0$, and $\Gamma_\infty$ is everything else.  The order completion $\overline{\Gamma_\infty}$ is $S^1$, and $\Lambda_0$ is a discrete lamination on this circle.  This proves

\begin{lem}
Let $G, H$ be any finite cyclic groups. Then there exists an injective homomorphism $\rho: G * H \to \Homeop(S^1)$ such that $\rho(G * H)$ admits a discrete invariant lamination. \qed
\end{lem} 

Now one can easily add more leaves to $\Lambda_0$ to construct a $G*H$-invariant very full and loose lamination $\Lambda_1$.  For example, in the left circle of the seed $\Gamma_0$, first take a polygon which has one vertex in each connected component of the complement of the dotted segments and is invariant under the action of $G$.  Then in the region between this polygon and an element of $\Lambda_0$ in $\Gamma_0$, add infinitely many triangles to make the lamination very full and loose in that region. Now fill the other such regions so that lamination becomes $G$-invariant.  One can do the same thing for the right circle to get an $H$-invariant very full loose lamination, and then extend it as a $G*H$-invariant lamination $\Lambda_1$ which contains $\Lambda_0$ as a sublamination.  The final result is in Figure \ref{fig:lambda1}.  $\Lambda_1$ is obviously very full, and loose away from $\Lambda_0$.  It is loose at $\Lambda_0$ because the leaves of $\Lambda_0$ are not contained in gaps---they are instead limits of gaps.

\begin{figure}
\includegraphics{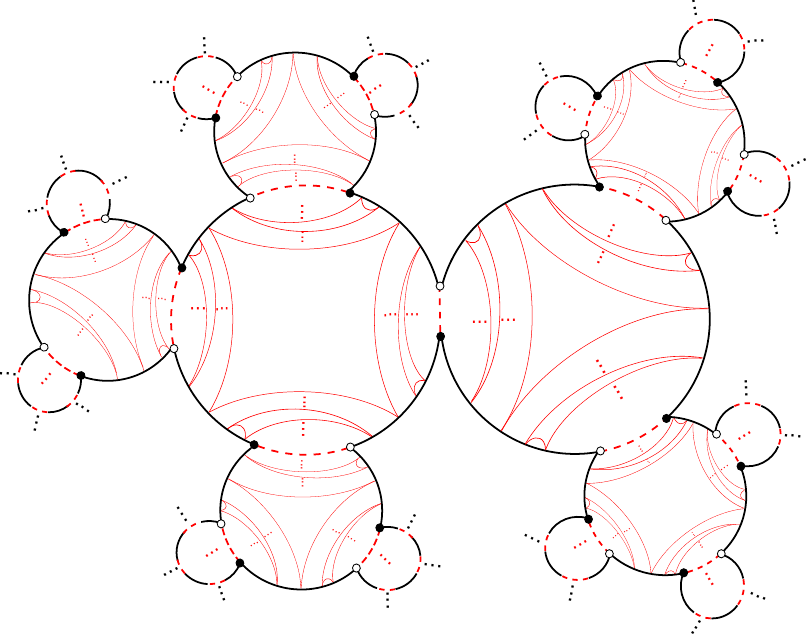}
\caption{The very full and loose lamination $\Lambda_1\supset \Lambda_0$.  We have removed the markings to avoid clutter.}
\label{fig:lambda1}
\end{figure}

We need to construct another $G*H$-invariant very full loose lamination $\Lambda_2$, so that $\Lambda_1$ and $\Lambda_2$ have distinct endpoints.  To build $\Lambda_2$, we first replace each leaf of $\Lambda_0$ with endpoints by four leaves forming, say, a rectangle such that each endpoint of the original dotted segment lies between two adjacent vertices of the rectangle.  In the two regions between the rectangle and the endpoints of the original leaf in $\Lambda_0$, put infinitely many triangles to make the lamination very full and loose.  These choices can obviously be made so that the endpoints of the new leaves are disjoint from $\Lambda_1$, and by working in one region at a time, we can do the construction $G*H$-invariantly, resulting in Figure \ref{fig:rectangles}.  In the regions where all the rectangles are visible, we do the exactly same thing as when construction $\Gamma_1$ from $\Gamma_0$: take a big invariant polygon, and fill out all the complementary regions. The result is shown in Figure \ref{fig:lambda2}.

\begin{figure}
\centering
\begin{minipage}{.3\textwidth}
  \centering
  \includegraphics[width=.6\linewidth]{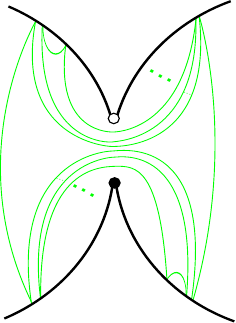}
  \captionof{figure}{Replacing $\Lambda_0$ with rectangles and triangles.}
  \label{fig:rectangles}
\end{minipage}%
\begin{minipage}{.8\textwidth}
  \centering
  \includegraphics[width=.7\linewidth]{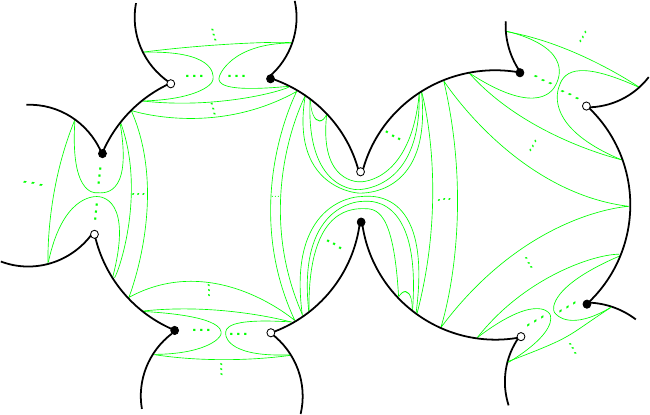}
  \captionof{figure}{The very full and loose lamination $\Lambda_2$.}
  \label{fig:lambda2}
\end{minipage}
\end{figure}

Finally, to show $(\rho(G*H),\Lambda_1,\Lambda_2)$ is a pseudo-fibered triple, we need to show that every element of $G*H = \rho(G*H)$ has countably many fixed points in its action on $S^1$.  There are two ways to prove this, either using Bass-Serre theory, or the existence of even more $G*H$-invariant laminations under this action.  

For the latter approach, we quote the following result:

\begin{thm}[\cite{BaikFuchsian}] 
\label{thm:mobius}
Every subgroup of $\Homeop(S^1)$ admitting three very full invariant laminations with distinct endpoint sets is M\"obius-like, meaning every element is (individually) M\"{o}bius-like. \qed
\end{thm} 

Since both $G$ and $H$ are finite, there is a large freedom to construct laminations inductively as before.  Indeed, we can slightly perturb the construction of $\Lambda_2$ to get a third very full and loose invariant lamination $\Lambda_3$ with endpoints distinct from $\Lambda_1$ and $\Lambda_2$.  Theorem \ref{thm:mobius} then implies every element of $G*H$ is M\"{o}bius-like, hence has finitely many fixed points.

Alternatively, to show every element of $G*H$ acts on $S^1$ with at most two fixed points, we can use Bass-Serre theory.  Clearly torsion elements of $G*H$ act freely on $S^1$.  Nontorsion elements must have their fixed points in the subset $S^1 \setminus \Gamma_\infty$, which can be identified with the ends of the Bass-Serre tree for $G*H$.  Standard results now imply such elements have two fixed points in $S^1$.
\end{proof}

%As a direct corollary, if $G$ in the above theorem is torsion-free and discrete in addition, then $G$ is a Fuchsian group (it was shown in \cite{BaikFuchsian} that any elliptic element of such $G$ is of finite order). If we think about how $\Lambda_2$ above was constructed, it is actually easy to see that one can easily construct a third lamination $\Lambda_3$ which is very full and still does not share any endpoints with $\Lambda_1$ and $\Lambda_2$. Then Theorem \ref{thm:mobius} tells us that $G$ is a M\"obis group. In particular, we show that it is M\"obius-like, i.e, each element has at most two fixed points (hence, countable!). 

%The issue here is that such a $G$ is then necessarily (check if this is actually the case) non-discrete. 
Theorem \ref{thm:free-product} has an immediate corollary in the context of 3-manifold groups.

\begin{cor} \label{thm:consum-lensspace}
Let $M$ be a connected sum of two lens spaces. Then $\pi_1(M)$ admits a pseudo-fibered group action on $S^1$. 
\end{cor}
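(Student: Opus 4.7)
The plan is to identify $\pi_1(M)$ as a free product of two finite cyclic groups and then invoke Theorem \ref{thm:free-product} directly. First, I would recall that a lens space $L(p,q)$ has fundamental group isomorphic to $\ZZ/p\ZZ$, which is a finite cyclic group. Writing $M = L(p_1,q_1) \# L(p_2,q_2)$, the standard van Kampen argument for connected sums of 3-manifolds (using that the separating $S^2$ is simply connected) gives
\[
\pi_1(M) \;\cong\; \pi_1(L(p_1,q_1)) * \pi_1(L(p_2,q_2)) \;\cong\; \ZZ/p_1\ZZ * \ZZ/p_2\ZZ.
\]

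With this identification in hand, the corollary is immediate from Theorem \ref{thm:free-product}: setting $G = \ZZ/p_1\ZZ$ and $H = \ZZ/p_2\ZZ$, that theorem supplies an embedding of $G*H$ into $\Homeop(S^1)$ whose image is pseudo-fibered. Pulling back this action along the isomorphism $\pi_1(M) \cong G*H$ produces the required pseudo-fibered action of $\pi_1(M)$ on $S^1$.

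There is no real obstacle, since the statement is essentially a translation of Theorem \ref{thm:free-product} into 3-manifold language. The only nontrivial observation is that connected sums of lens spaces realize precisely the class of fundamental groups covered by the free-product construction of the previous theorem, which is why lens spaces (rather than, say, arbitrary closed 3-manifolds with finite fundamental group) appear in the statement.
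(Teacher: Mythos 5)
Your proposal is correct and matches the paper's intent exactly: the paper presents this corollary as an immediate consequence of Theorem \ref{thm:free-product}, relying precisely on the van Kampen identification $\pi_1(L(p_1,q_1)\,\#\,L(p_2,q_2)) \cong \ZZ/p_1\ZZ * \ZZ/p_2\ZZ$ and then pulling back the pseudo-fibered action constructed there. No further comment is needed.
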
 

%It is not clear if the method of the proof of Theorem \ref{thm:free-product} could be generalized to the torsion-free case. Hence, it is not clear if all the assumptions in Conjecture \ref{conj:promotion} (namely, torsion-freeness, discreteness, and free-indecomposability) are strictly necessary.  For example, it's possible that free-indecomposability follows from the other conditions.  In any case, it would be preferable to have some dynamical characterization of free-indecomposability in terms of laminations, and Theorem \ref{thm:free-product} serves as an indication that there is no a priori reason to believe Conjecture \ref{conj:promotion} without at least some of these assumptions.

%%%%%%%%%%%%%%%%%%%%%%%%%%%%%%%%%%%
%%%%%%%%%%%%%%%%%%%%%%%%%%%%%%%%%%%
\section{The Tits Alternative for Pseudo-Fibered Groups}
\label{sec:Tits}

Throughout this section, $G$ will denote a torsion-free pseudo-fibered group, except where explicitly indicated elsewhere.  This will allow us to apply Theorem \ref{thm:pAlike_elements}, which we may do sometimes without mentioning.

%%%%%%%%%%%%
\subsection{Proof of Theorem \ref{thm:dynamics}}
\label{subsec:dynamics}
To prove that pseudo-fibered groups satisfy the Tits alternative, we first prove Theorem \ref{thm:dynamics}, which we recall says that two elements of a pseudo-fibered group have either equal or disjoint sets of periodic points.  This can be done, for instance, by analyzing how each element of the group acts on the quotient of the circle obtained by collapsing leaves of an invariant lamination. One can show that such a quotient is a dendrite as in \cite{Winkel16}, and this point of view has its own advantages. But for our purpose, it is simpler to analyze the group action on the circle directly. 

We will need a number of lemmas.  The first follows immediately from the definitions, so we leave its proof to the reader:

\begin{lem} 
\label{lem:two_dense_imply_totally_disconnected}
Suppose there are two dense laminations with distinct endpoints. Then each of the laminations is totally disconnected. \qed
\end{lem}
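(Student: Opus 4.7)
The plan is to argue by contradiction. The $G$-invariance of the laminations plays no explicit role here; the statement is purely laminar, and we reason directly with $\Lambda_1,\Lambda_2 \subset \mathcal M$ using only that each has dense endpoint set in $S^1$ and that these endpoint sets are disjoint.

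Suppose $\Lambda_1$ is not totally disconnected, so some open set $U \subset \DD$ is foliated by leaves of $\Lambda_1$. Pick a small transversal arc $\tau \subset U$ to this local foliation. Since the leaves are pairwise unlinked chords and $\Lambda_1$ is closed in $\mathcal M$, the leaves of $\Lambda_1$ meeting $\tau$ form a continuous family parameterised injectively by $\tau$, so the two endpoint maps $\tau \to S^1$ are continuous and cannot both be constant (else all those leaves would coincide). Their image thus contains a nondegenerate arc $V \subset S^1$, every point of which is an endpoint of some leaf of $\Lambda_1$. Density of $\Lambda_2$ now supplies an endpoint of $\Lambda_2$ inside the nonempty interior of $V$; this point is simultaneously an endpoint of $\Lambda_1$, contradicting the distinct-endpoints hypothesis. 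Reversing the roles of $\Lambda_1$ and $\Lambda_2$ shows $\Lambda_2$ is also totally disconnected.

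The only step requiring some care is the claim that the endpoint maps sweep out an arc of positive length; this follows because distinct leaves have distinct unordered endpoint pairs, so the injective parameterisation by $\tau$ forces at least one of the endpoint maps to vary continuously and non-trivially. Beyond closedness and non-linking, no additional lamination property (such as looseness or very-fullness) is invoked, consistent with the minimal hypotheses of the lemma. The main ``obstacle'', such as it is, is bookkeeping: making sure the local leaf parameterisation through $\tau$ is genuinely continuous—which is immediate from $\Lambda_1$ being closed in $\mathcal M$ together with the ordering imposed on leaves through $\tau$ by non-linking.
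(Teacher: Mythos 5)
Your argument is correct: the paper leaves this lemma to the reader as ``immediate from the definitions,'' and your proof is exactly the natural filling-in of that claim (a foliated open set forces the endpoints of $\Lambda_1$ to sweep out a nondegenerate arc of $S^1$, which must then contain an endpoint of the dense lamination $\Lambda_2$, violating disjointness of the endpoint sets). The continuity of the leaf-through-a-point map, which you flag as the only delicate step, does indeed follow from closedness of $\Lambda_1$ in $\mathcal{M}$ together with the fact that two unlinked leaves can only meet on $S^1$, so no gap remains.
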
 

Recall that by definition, a lamination $\Lambda$ is a type of closed subset of $(S^1 \times S^1 - \Delta)/(x,y)\sim(y,x)$.  Thus it makes sense to talk about the neighborhood in $\Lambda$ of a leaf, isolated leaves, \emph{etc}.

\begin{lem} 
\label{lem:totally_disconnected_very_full_imply_boundary_or_limit}
Each leaf of a totally disconnected very full lamination is either a boundary leaf of a gap, or is the limit of an infinite sequence of gaps. 
\end{lem}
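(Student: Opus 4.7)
The plan is to argue by contrapositive: given a leaf $\ell = (a,b)$ of $\Lambda$ that is not a boundary leaf of any gap, I will produce an infinite sequence of distinct gaps $G_n$ converging to $\ell$ in the Hausdorff topology on closed subsets of $\DD$. Fix an interior point $x$ of $\ell$ (viewed as a chord in $\DD$), choose one of the two open components $D^+$ of $\DD \setminus \ell$, and select a shrinking sequence of open neighborhoods $B_n \subset \mathrm{int}(\DD)$ of $x$ with radii $\epsilon_n \to 0$; set $B_n^+ := B_n \cap D^+$.

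The first key observation is that each $B_n^+$ must meet the support $|\Lambda|$ of the lamination. Indeed, if not, then $B_n^+$ is a connected open subset of $\DD$ disjoint from all leaves, hence contained in the interior of a single gap $G$; but then $\ell \subset \overline{B_n^+} \subset G$, and very-fullness (gaps are finite-sided polygons) forces $\ell$ to be a side of $G$, contradicting the hypothesis. This lets me pick a leaf $m_n \neq \ell$ meeting $B_n^+$. Since $m_n$ is unlinked with $\ell$ and meets $D^+$, we have $m_n \subset \overline{D^+}$.

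Next I claim $m_n \to \ell$ in $\mathcal{M}$. By compactness of endpoint pairs, take any subsequential limit $(p^*, q^*)$ of the endpoints of $m_n$. If $p^* = q^*$, then $m_n$ Hausdorff-degenerates to a single boundary point, contradicting that $m_n$ passes within $\epsilon_n \to 0$ of the interior point $x \in \mathrm{int}(\DD)$. If $p^* \neq q^*$, then $m_n$ Hausdorff-converges to the chord $m_*$ joining $p^*, q^*$; this $m_*$ lies in $\overline{D^+}$, is unlinked with $\ell$, and contains $x \in \mathrm{int}(\ell)$. Realizing chords as hyperbolic geodesics, so that two distinct geodesics through an interior point must cross transversally, being unlinked with $\ell$ forces $m_* = \ell$. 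Hence every subsequential limit of endpoint pairs equals $\{a,b\}$, and $m_n \to \ell$.

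Finally, let $R_n$ be the open connected component of $\DD \setminus (\ell \cup m_n)$ adjacent to $\ell$ on the $+$ side. Since $R_n$ is nonempty and open, total disconnectedness of $\Lambda$ yields a point $q_n \in R_n$ not on any leaf, lying in the interior of some gap $G_n$. The interior of $G_n$ is connected and disjoint from $\ell \cup m_n$, so $\mathrm{int}(G_n) \subset R_n$ and thus $G_n \subset \overline{R_n}$. Since $m_n \to \ell$, the closures $\overline{R_n}$ Hausdorff-converge to $\ell$, so $G_n \to \ell$. The $G_n$ are eventually distinct, because otherwise some fixed gap $G$ with nonempty interior would satisfy $G \subset \bigcap_n \overline{R_n} = \ell$. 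I expect the convergence step $m_n \to \ell$ to be the main technical hurdle; the other steps follow routinely from very-fullness, total disconnectedness, and the planar topology of unlinked chords.
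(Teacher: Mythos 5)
Your proof is correct and rests on the same idea as the paper's (which is only a two-sentence sketch): total disconnectedness forces points off the support, hence gaps, arbitrarily close to any leaf that is not a boundary leaf of a gap. Your write-up is more careful than the paper's --- in particular, producing the leaves $m_n \to \ell$ and trapping the gaps $G_n$ in the shrinking lunes $R_n$ is what actually guarantees that the gaps converge to $\ell$ rather than merely meeting every neighborhood of it, a point the paper's one-liner glosses over.
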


\begin{proof}
This is a direct consequence of the definition of a very full lamination.  Indeed, if a neighbourhood of a leaf meets no gaps, it must be foliated, contradicting that the lamination is totally disconnected.
\end{proof} 

\begin{lem} 
\label{lem:characterizaion_of_loose}
Let $\Lambda$ be a totally disconnected very full lamination. Then $\Lambda$ is loose if and only if the following conditions are satisfied:
\begin{itemize}
\item[(1)] For each $p \in S^1$, at most finitely many leaves of $\Lambda$ have $p$ as an endpoint. 
\item[(2)] There are no isolated leaves. 
\end{itemize} 
\end{lem}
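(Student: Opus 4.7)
The forward direction is a direct unpacking. For (1), any two leaves sharing endpoint $p$ must lie in a common (unique) gap, which is a finite polygon with at most two edges at the vertex $p$; hence at most two leaves through $p$. For (2), Lemma \ref{lem:totally_disconnected_very_full_imply_boundary_or_limit} says any leaf $\ell$ either is a boundary leaf of a gap or the limit of gaps; the latter case gives accumulating leaves, so $\ell$ is not isolated. If $\ell$ bounds a gap $G$ on one side, total disconnectedness forces either accumulating gaps on the other side (again non-isolation) or another gap $G' \neq G$; in the second subcase, at each endpoint $p$ of $\ell$ the adjacent edges of $G$ and $G'$ at $p$ are distinct leaves sharing $p$, with $\ell$ lying between them in the cyclic order, so they cannot lie in any common gap (which would force $\ell$ into that gap's interior), violating looseness.

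For the reverse direction, I argue by contradiction. Assume $\Lambda$ is totally disconnected and very full, satisfies (1) and (2), but is not loose: there exist leaves $\ell, \ell'$ sharing endpoint $p$ not lying in a common gap. By (1), I enumerate the leaves at $p$ in the cyclic sector from $\ell$ to $\ell'$ as $\ell = m_0, m_1, \ldots, m_{k+1} = \ell'$. Consider first the case where every adjacent pair $(m_i, m_{i+1})$ is edges of some common gap $G_i$; this forces $k \geq 1$, since $k = 0$ would make the unique pair $(\ell, \ell')$ contradict the failure hypothesis. Then each intermediate $m_j$ ($1 \le j \le k$) is flanked at $p$ by the distinct finite polygons $G_{j-1}, G_j$. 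Because these polygons have positive width and, by (1) applied at both endpoints of $m_j$, only finitely many leaves share an endpoint with $m_j$—all of them at positive $\mathcal{M}$-distance from $m_j$—a small $\mathcal{M}$-neighborhood of $m_j$ contains only $m_j$ itself, so $m_j$ is isolated in $\Lambda$, contradicting (2).

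In the remaining case, some adjacent pair $(m_i, m_{i+1})$ is not edges of a common gap, and I invoke a \emph{sector argument}. Let $R$ be the region bounded by $m_i, m_{i+1}$, and the arc $[q_i, q_{i+1}] \subset S^1$ between their other endpoints not containing $p$. Adjacency at $p$ means $m_i, m_{i+1}$ are the only leaves through $p$ in $\overline{R}$, so $\partial R \cap S^1 = \{p\} \sqcup [q_i, q_{i+1}]$, with the single point $p$ isolated from the arc. Consequently every leaf strictly inside $R$ has both endpoints on $[q_i, q_{i+1}]$, and an elementary geometric estimate shows that such a chord is bounded away from $p$ by a positive distance. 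It follows that a sufficiently small open corner of $R$ at $p$ (bounded by initial arcs of $m_i$ and $m_{i+1}$) is leaf-free and hence lies in a single gap $G$ of $\Lambda$; but then $G$ has $p$ as a vertex with $m_i, m_{i+1}$ as its two edges at $p$, contradicting the assumption. I expect the sector argument to be the main obstacle, since it requires carefully exploiting the isolation of $p$ in $\partial R \cap S^1$ to certify that the corner is genuinely leaf-free (in particular, distinguishing endpoint-closeness in $S^1$ from $\mathcal{M}$- and Hausdorff-closeness of chords).
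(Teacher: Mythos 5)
Your proof is correct, and the forward direction is essentially the paper's: (1) comes from counting the edges of a gap at a vertex, and (2) from showing that an isolated leaf is flanked by two gaps whose adjacent edges at a shared endpoint violate looseness (the paper builds the two flanking gaps by hand via visibility sets, whereas you extract one from Lemma \ref{lem:totally_disconnected_very_full_imply_boundary_or_limit} and the other from isolation; both work). The converse is where you genuinely diverge. The paper takes the two ``facing'' edges $L_1, L_2$ of two gaps sharing the vertex $p$ and argues that $L_1$ is either isolated (killed by (2)) or approximated by leaves which, being unlinked with $L_2$, are forced to end at $p$ (killed by (1)) --- a linking argument. You instead prove the structural fact that any two leaves adjacent at $p$ in the wedge sense automatically cobound a gap: every chord meeting the interior of your region $R$ has both endpoints on $[q_i,q_{i+1}]$, hence lies in the convex hull of that arc, which is at positive distance from $p$, so the corner of $R$ at $p$ is leaf-free and sits in a single gap with $m_i, m_{i+1}$ as its edges there. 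This makes your Case B vacuous and reduces everything to Case A, where an intermediate leaf is a full edge of two distinct gaps on opposite sides and is therefore isolated, contradicting (2). What each approach buys: the paper's linking argument uses only the two innermost leaves and needs both hypotheses in a single dichotomy; your sector argument isolates a reusable geometric fact (adjacent leaves at a non-outermost corner always cobound a gap) and makes the case analysis fully explicit. The one step worth spelling out when you write this up is in your forward direction for (2): ``no leaves accumulate on $\ell$ in $\mathcal{M}$ from the other side'' does not immediately produce a leaf-free open region in the disk adjacent to $\ell$, since short leaves clustering at an endpoint of $\ell$ are far from $\ell$ in $\mathcal{M}$; one should work near the midpoint of $\ell$, where any leaf on the far side that comes close is forced to converge to $\ell$ in $\mathcal{M}$. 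This is the same level of detail the paper itself elides, so it is a point of polish rather than a gap.
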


\begin{proof} 
Suppose $\Lambda$ is loose. Then each $p\in S^1$ is an endpoint of at most two leaves (maybe none) of $\Lambda$. Hence condition (1) follows immediately.

For condition (2), suppose there exists a leaf $L$ which is isolated. Let $J_1$ and $J_2$ be the connected components of the complements of the endpoints of $L$ in $S^1$.  The fact that $L$ is isolated means there exists an open arc $I_1$ containing one endpoint of $L$, and another open arc $I_2$ containing the other endpoint of $L$, such that for each $i = 1, 2$, there exists no leaf connecting $I_1 \cap J_i$ to $I_2 \cap J_i$.  For $i=1,2$, define $\Lambda_i$ to be the set of leaves of $\Lambda$ with endpoints in $J_i$ that are visible from both endpoints of $L$. Both $\Lambda_i$ are nonempty since $\Lambda$ is dense. For each $i=1,2$, $\Lambda_i\cup\{L\}$ is the set of boundary leaves of a gap $P_i$ of $\Lambda$. This contradicts looseness of $\Lambda$, since both $P_1$ and $P_2$ are gaps sharing some of their vertices. 

Now for the converse, assume $\Lambda$ satisfies the conditions (1) and (2). Suppose $p$ is a common endpoint of two gaps. Let $L_1, L_2$ be the innermost leaves ending at $p$. (It's possible $L_1=L_2$, but this does not change what follows.) Then $L_1$ is either isolated (absurd by condition (2)), or is approximated by infinitely many leaves. Since these leaves cannot cross $L_2$, they must end at $p$, contradicting condition (1).
\end{proof} 

%For a lamination $\Lambda$, let $\Ends(\Lambda)\subset S^1$ be the set of endpoints of all leaves of $\Lambda$.

\begin{lem}
\label{lem:hyperbolic_elements} Let $G$ be a pseudo-fibered group, and
let $\Lambda$ be a very full loose $G$-invariant lamination. If $g$ is
a hyperbolic element, then the fixed points of $g$ in $S^1$ are not an
endpoint of any leaf of $\Lambda$. 
\end{lem}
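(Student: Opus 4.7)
The plan is to argue by contradiction: suppose a fixed point $p$ of $g$ lies in $\Ends(\Lambda)$. After possibly replacing $g$ by $g^{-1}$, I may assume $p = p^+$ is the attracting fixed point; write $p^-$ for the repelling one. The crucial dynamical input is that $g$ has no periodic points outside $\{p^+, p^-\}$: since $g$ is orientation-preserving and moves every point of $S^1 \setminus \{p^+, p^-\}$ strictly monotonically toward $p^+$, no nontrivial power $g^n$ can fix any other point.

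Let $\ell = (p^+, q) \in \Lambda$. The first case to dispatch is $q \neq p^-$: here $q$ is not periodic, so the leaves $g^n(\ell) = (p^+, g^n(q))$, $n \in \ZZ$, are pairwise distinct, giving infinitely many leaves of $\Lambda$ incident to $p^+$. But looseness caps the number of leaves at any endpoint by two---two leaves sharing an endpoint must be edges of a common gap, and a polygon has exactly two edges meeting at each vertex---so this is a contradiction.

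It remains to rule out $q = p^-$, i.e., $\ell = (p^+, p^-)$. By Lemma \ref{lem:two_dense_imply_totally_disconnected} applied to the two laminations in the pseudo-fibered triple, $\Lambda$ is totally disconnected; so by Lemma \ref{lem:totally_disconnected_very_full_imply_boundary_or_limit}, $\ell$ is either a boundary leaf of a gap $P$ or a limit of gaps. In the first subcase, the orientation-preserving homeomorphism $g$ preserves $\ell$ and the side of $\ell$ on which $P$ lies, so $g(P) = P$ by uniqueness of the gap adjacent to a leaf on a given side. Hence $g$ permutes the finite vertex set of $P$ while fixing $p^+$ and $p^-$; since $P$ has at least three vertices, some power $g^N$ must fix an additional vertex, contradicting the dynamical fact above.

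The main obstacle is the remaining subcase, in which $\ell$ bounds no gap on either side and is merely a limit of gaps. Here my plan is to bring in the companion lamination $\Lambda'$ from the pseudo-fibered triple. Because both endpoints of $\ell$ lie in $\Ends(\Lambda)$, the distinct-endpoints condition places them outside $\Ends(\Lambda')$, and the Rainbow Lemma supplies rainbows at $p^+$ and at $p^-$ in $\Lambda'$, every leaf of which necessarily crosses $\ell$ (one endpoint in each arc of $S^1 \setminus \{p^\pm\}$). Iterating $g$ on these rainbow leaves and comparing the resulting configurations with the sequence of $\Lambda$-gaps accumulating on $\ell$ should force an incompatibility---most likely by producing, after a suitable power of $g$, a gap of $\Lambda$ adjacent to $\ell$ to which the previous boundary-of-gap argument applies. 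I expect this cross-lamination bookkeeping to be the technically hardest step.
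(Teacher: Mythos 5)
Your first two steps are essentially correct. The orbit argument ruling out $q \neq p^-$ is exactly the paper's opening move, and your handling of the subcase where $\ell = (p^+,p^-)$ is a boundary leaf of a gap $P$ (showing $g(P)=P$ and deriving a forbidden periodic vertex) is a valid variant of the paper's, which instead observes that $P$, being a finite-sided polygon, must have an edge joining $p^+$ to a non-fixed point of $I$, reducing to the first case. The genuine gap is your final subcase, which you leave as a plan rather than a proof, and the plan as stated cannot succeed: you hope that ``a suitable power of $g$'' will produce a gap of $\Lambda$ adjacent to $\ell$, but whether $\ell$ bounds a gap on a given side is a $g$-invariant condition, since $g$ preserves $\Lambda$, fixes $\ell$, and (being orientation-preserving with both $p^\pm$ fixed) preserves each side of $\ell$. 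No power of $g$ can change it. The excursion into the companion lamination $\Lambda'$ and its rainbows is also unnecessary.

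The missing idea is an elementary linking argument entirely inside $\Lambda$. Suppose $\ell=(p^+,p^-)$ does not bound a gap on the side of a component $I$ of $S^1\setminus\{p^+,p^-\}$. Then $\ell$ is a limit of leaves from that side, and since by looseness only finitely many leaves end at $p^+$ or $p^-$, we may take an approximating leaf $l'=(a,b)$ with both endpoints in $I$, say $a$ near $p^-$ and $b$ near $p^+$. Now $g$ acts on $I$ as a fixed-point-free translation pushing every point toward $p^+$; as $l'\to\ell$ we have $a\to p^-$, hence $g(a)\to g(p^-)=p^-$, while $b\to p^+$. So for $l'$ close enough to $\ell$, the point $g(a)$ lies in the arc of $I$ between $a$ and $b$, whereas $g(b)$ lies between $b$ and $p^+$, outside that arc. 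Thus $l'$ and $g(l')$ are linked, contradicting that $\Lambda$ is a $g$-invariant lamination. This is exactly how the paper closes the case, and it completes your proof with no appeal to $\Lambda'$ beyond the use (which you made correctly) of the second lamination to know $\Lambda$ is totally disconnected.
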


\begin{proof}
Let $p, q$ be the fixed points of $g$. Suppose $p$ is an endpoint of a leaf $l$. 

If the other endpoints of $l$ is not $q$, then the set $\{g^{\circ n}(l) : n \in \ZZ\}$ gives an infinite set of leaves each of which has $p$ as an endpoint, contradicting Lemma \ref{lem:characterizaion_of_loose}. Hence, we may assume the other endpoint of $l$ is $q$. 

Let $I$ be a connected component of $S^1 \setminus \{p, q\}$. Define $\Lambda_I$ to be the set of leaves whose endpoints are in $I$, and visible from both $p$ and $q$. Assume $\Lambda_I$ is nonempty. Since $\{L\} \cup \Lambda_I$ bound a gap, say $P$, $\Lambda_I$ must be finite. But this means there must be a leaf in $\Lambda_I$ which connects $p$ to a point in $I$ which we already saw impossible. Now assume $\Lambda_I$ is empty. This means there exists a family of infinitely many leaves contained in $I$ which accumulate to $l$. But since $g$ acts as a translation on $I$, this is impossible (if $l'$ is a leaf close enough to $l$, then $l'$ and $gl'$ must be linked). 
\end{proof} 

\begin{thm}[Solodov \cite{Navasbook}]
\label{thm:Navas}
If $G$ is a subgroup of $\Homeop(\RR)$ such that each non-trivial element has at most one fixed point, and there is no global fixed point, 
then $[G,G]-\{Id\}$ consists of fixed-point-free elements.
\end{thm}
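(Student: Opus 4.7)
The plan is to apply the structural theorem classically attributed to Solodov, which asserts that under the hypotheses $G$ is topologically conjugate to a subgroup of the affine group $\mathrm{Aff}_+(\RR) = \RR \rtimes \RR^*_+$. Since the commutator subgroup of $\mathrm{Aff}_+(\RR)$ consists of translations, which are fixed-point-free, the claim then follows by pulling back along the conjugacy, as topological conjugacy preserves the (non)existence of fixed points. The substance of the proof therefore lies in establishing this structural result.

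As a warm-up, I would first dispose of the case where $G$ acts freely on $\RR$: H\"older's theorem then directly implies $G$ is abelian, so $[G, G] = \{\Id\}$. Otherwise some $g_0 \in G$ has a unique fixed point $p$, and I would next examine the stabilizer $\Stab_G(p)$. Every nontrivial element of $\Stab_G(p)$ has $p$ as its only fixed point, so restricting to $(-\infty, p)$ and $(p, \infty)$ gives two fixed-point-free actions, each abelian by H\"older. Since a homeomorphism of $\RR$ fixing $p$ is determined by its two half-line restrictions, $\Stab_G(p)$ itself is abelian; this already handles all commutators of two elements that share a fixed point.

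The principal task is to construct a homomorphism $\rho: G \to \RR^*_+$ whose kernel is exactly the fixed-point-free elements of $G$ together with $\Id$, and to exhibit an order-preserving, $G$-equivariant map from a $G$-orbit into $\RR$ that realizes $G$ as a subgroup of $\mathrm{Aff}_+(\RR)$. The standard route chooses a nontrivial $g \in \Stab_G(p)$ as reference and defines $\rho(h)$ by the way conjugation by $h$ rescales the dynamics of $g$ near $p$. The no-global-fixed-point hypothesis forces the orbit $G \cdot p$ to be nontrivial, providing enough data to carry out the construction, while the at-most-one-fixed-point hypothesis constrains interactions between elements just enough to ensure consistency.

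The main obstacle is verifying that $\rho$ is well-defined and that $\ker \rho$ has the asserted structure. Concretely, this amounts to showing that for $g \in \Stab_G(p)$ and any $h \in G$ with $h(p) \ne p$, the maps $g$ and $hgh^{-1}$ agree pointwise on $\RR$ only if they are identically equal; indeed, fixed points of $[g, h] = g \cdot (hgh^{-1})^{-1}$ correspond precisely to such agreement points. A sign-pattern analysis using the classification of single-fixed-point elements into their four possible dynamical types, together with the constraint that $g^{-1}(hgh^{-1}) \in G$ has at most one fixed point, handles the generic cases, but the subtle subcases (for instance where both $g$ and $hgh^{-1}$ are ``one-sided semi-translation'' types) require a more elaborate orbit-propagation argument to derive the contradictions that rule them out.
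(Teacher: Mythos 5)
The paper does not actually prove this statement: it is quoted from Step 4 of the proof of Theorem 2.2.36 in Navas's book, where the freeness of the $[G,G]$-action on $\RR$ is established \emph{first}, by a direct dynamical argument, and only afterwards is the affine structure extracted (one applies H\"older's theorem to the now-free action of $[G,G]$ to get a translation-number homomorphism $\tau:[G,G]\to\RR$, and the conjugation action of $G$ rescales $\tau$, producing the map to $\mathrm{Aff}_+(\RR)$). Your proposal runs this logic in the opposite direction, and as a result it is circular: you propose to deduce the fixed-point-freeness of commutators from the affine structure theorem, but then concede that establishing that structure --- the well-definedness of $\rho$ and the identification of $\ker\rho$ --- ``amounts to showing that\ldots fixed points of $[g,h]$\ldots,'' i.e.\ to the very statement being proved. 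The entire mathematical content of Solodov's argument is concentrated in the step you defer to ``a more elaborate orbit-propagation argument'' for the subtle subcases; nothing in the proposal actually carries out that analysis, so no proof has been given.

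Two further points. First, Solodov's structure theorem yields a \emph{semiconjugacy} to a subgroup of the affine group, not a topological conjugacy (blow up an orbit of an affine action to see that conjugacy fails in general); with only a semiconjugacy $\varphi\circ g=\bar g\circ\varphi$, the implication ``$\bar g$ is a nontrivial translation $\Rightarrow$ $g$ is fixed-point-free'' still requires knowing that the induced homomorphism is nontrivial on every nontrivial element of $[G,G]$, which is again essentially the statement at issue. Second, your warm-up observations (the free case via H\"older, and the abelianity of $\Stab_G(p)$ via the two half-line restrictions) are correct and do appear as early steps in Navas's treatment, but they only dispose of commutators of elements sharing a fixed point; the case of elements with distinct (or no) fixed points is where the work lies. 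The honest fix is simply to cite the result, as the paper does, or to reproduce Navas's Step 4 in full.
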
 
\begin{proof} See Step 4 in the proof of Theorem 2.2.36 in \cite{Navasbook}. 
\end{proof} 

\begin{lem} 
\label{lem:pAlike_share_fixedpoints}
Let $(G, \Lambda_1, \Lambda_2)$ be a pseudo-fibered triple. Suppose $g \in G$ is properly pseudo-Anosov-like. If $h \in G$ shares a fixed point $p$ with $g$, then $\Fix_h = \Fix_g$. In particular, $h$ is also a properly p-A-like element. 
\end{lem}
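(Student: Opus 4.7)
The plan is to leverage looseness and the distinct-endpoints condition on $\Lambda_1,\Lambda_2$ to force $h$ to preserve both the attracting and repelling polygons of $g$, from which $\Fix_h = \Fix_g$ follows. Up to replacing $g$ by $g^{-1}$, I may assume $p$ is attracting for $g$, so that $p$ is a vertex of the attracting polygon $P^{+}_g \subset \Lambda_i$ for some $i \in \{1,2\}$; in particular $p \in \Ends(\Lambda_i)$. The first step is to observe that $h$ is properly p-A-like: by Lemma \ref{lem:hyperbolic_elements}, $h$ cannot be hyperbolic (since its fixed point $p$ lies in $\Ends(\Lambda_i)$), so Theorem \ref{thm:pAlike_elements} together with the non-ellipticity of $h$ gives that $h$ is properly p-A-like.

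Next I would identify $P^{+}_g$ with the unique gap $\Gamma_p$ of $\Lambda_i$ at $p$ provided by looseness. Because $g$ preserves $\Lambda_i$ and fixes $p$, uniqueness forces $g(\Gamma_p)=\Gamma_p$; since an orientation-preserving homeomorphism that fixes one vertex of an ideal polygon cyclically permutes the rest and hence must fix them all, every vertex of $\Gamma_p$ is a fixed point of $g$. Repelling fixed points of $g$ lie in $\Ends(\Lambda_j)$ with $j \neq i$, which is disjoint from $\Ends(\Lambda_i)$; thus the vertices of $\Gamma_p$ are precisely the attracting fixed points $\{p_1,\dots,p_k\}$ of $g$, and in fact $\Gamma_p = P^{+}_g$. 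Applying the same reasoning to $h$: if $p$ is attracting for $h$, then $p$ is a vertex of $P^{+}_h$, and the distinct-endpoints condition forces $P^{+}_h \subset \Lambda_i$, so by uniqueness $P^{+}_h = \Gamma_p = P^{+}_g$. (If instead $p$ is repelling for $h$, the parallel argument shows $P^{-}_h = P^{+}_g$, and the conclusion is identical after swapping attracting and repelling labels for $h$.) Either way, $g$ and $h$ have the same attracting fixed points $\{p_1,\dots,p_k\}$.

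The crux is to show $h$ also fixes each repelling fixed point $q_l$ of $g$. Since $h$ preserves $\Lambda_j$ and fixes every $p_l$, it preserves each arc $I_l := (p_l, p_{l+1})$, so it sends the repelling polygon $P^{-}_g$ (a gap of $\Lambda_j$ with exactly one vertex $q_l \in I_l$ for each $l$) to another gap $h(P^{-}_g)$ with one vertex $h(q_l) \in I_l$ for each $l$. I claim $h(P^{-}_g) = P^{-}_g$. If $h(q_l) = q_l$ for some $l$, then the gaps $P^{-}_g$ and $h(P^{-}_g)$ share the vertex $q_l$, so by uniqueness of the gap at $q_l$ in $\Lambda_j$ they coincide. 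Otherwise, set $\sigma_l := \mathrm{sgn}(h(q_l)-q_l)$ after orienting each $I_l$ from $p_l$ to $p_{l+1}$. A case analysis on the cyclic sequence $(\sigma_l)$ then produces a pair of edges---one of $P^{-}_g$ and one of $h(P^{-}_g)$---whose endpoints are linked on $S^1$, contradicting that both lie in the lamination $\Lambda_j$. This combinatorial check is the main obstacle: for any two adjacent arcs with equal signs $\sigma_l = \sigma_{l+1}$ the consecutive edges $q_l q_{l+1}$ and $h(q_l)h(q_{l+1})$ are directly linked, while for fully alternating patterns (possible only when $k$ is even) one finds linking between non-adjacent edges such as $q_l q_{l+1}$ and $h(q_{l+1})h(q_{l+2})$. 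The argument also needs $k \geq 3$, which follows from the very-full assumption that gaps have at least three sides together with the identification $\Gamma_p = P^{+}_g$.

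Once $h(P^{-}_g) = P^{-}_g$, the cyclic permutation of vertices induced by $h$ must be trivial, because each $h$-invariant arc $I_l$ contains exactly one vertex $q_l$ of $P^{-}_g$. Hence $h(q_l) = q_l$ for every $l$, giving $\Fix_g \subseteq \Fix_h$. The symmetric argument, now using that $h$ is properly p-A-like, yields $\Fix_h \subseteq \Fix_g$, so $\Fix_g = \Fix_h$ as claimed.
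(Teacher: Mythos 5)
Your outline for the case where the attracting polygon has at least three vertices is essentially the paper's argument: show $h$ is not hyperbolic via Lemma \ref{lem:hyperbolic_elements}, use looseness to force $h$ to fix every vertex of the attracting polygon of $g$, and then use unlinkedness of the repelling polygons to force them to coincide. (The paper gets ``$h$ fixes all vertices of $P^+_g$'' by noting that otherwise $\{h^{\circ n}(l)\}$ would be infinitely many leaves through $p$, rather than by your gap-invariance argument, but these are interchangeable.)

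However, there is a genuine gap: your claim that $k\geq 3$ is false. A properly pseudo-Anosov-like element is only required to have a positive \emph{even} number of fixed points alternating between attracting and repelling, so $g$ may have exactly two attracting and two repelling fixed points. (This is not a pathology: in the motivating fibered examples it occurs for lifts fixing a lift of a regular, non-singular periodic point, where the ``polygon'' is a single leaf of the stable lamination.) In that case the convex hull of the attracting fixed points is a single chord, the attracting polygon is a single leaf of $\Lambda_i$ rather than a gap, and the identification $\Gamma_p = P^+_g$ on which you base the bound $k\geq 3$ simply does not apply --- indeed one can check that this leaf cannot be a boundary leaf of any gap whose extra vertices are fixed, so it is a limit of gaps. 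Your combinatorial linking argument then collapses: with $k=2$ the two ``repelling polygons'' are single chords $q_1q_2$ and $h(q_1)h(q_2)$ with $q_l, h(q_l)$ in the same complementary arc $I_l$, and such a pair is unlinked whenever $h$ displaces $q_1$ and $q_2$ in opposite rotational directions, so no contradiction arises. This is exactly the case the paper must treat separately (its ``Case 2''), and it does so with an entirely different tool: identifying one component $I$ of $S^1\setminus\Fix_g$ with $\RR$, observing that every nontrivial element of $\langle g,h\rangle$ has exactly one fixed point in $I$, and applying Solodov's theorem to conclude that if $r_g\neq r_h$ then $\langle g,h\rangle$ would be abelian, which forces $r_g=r_h$ after all. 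Your proof needs this (or some substitute) to be complete.
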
 
\begin{proof} First, we show that $h$ cannot be hyperbolic. Since $p$ is a fixed point of $g$, then by Theorem \ref{thm:pAlike_elements} one of $\Lambda_1$ or $\Lambda_2$ contains a gap which has $p$ as a vertex. But then Lemma \ref{lem:hyperbolic_elements} says that $p$ cannot be a fixed point of an hyperbolic element. Therefore, by Theorem \ref{thm:pAlike_elements}, $h$ must be pseudo-Anosov-like. 

Without loss of generality, we may assume that $p$ is an attracting fixed point of both $g, h$, and $\Lambda_1$ contains the attracting polygon $P_g$ of $g$. Let $q$ be a vertex of $P_g$ which is connected to $p$ by a boundary leaf $l$ of $P_g$. If $q$ is not a fixed point of $h$, then $\{h^{\circ n}(l) : n \in \ZZ\}$ is an infinite set of leaves which share $p$ as a common endpoint. This is impossible by Lemma \ref{lem:characterizaion_of_loose} since $\Lambda_1$ is loose. This inductively shows that all vertices (i.e., all attracting fixed points of $g$) are fixed by $h$.  Theorem \ref{thm:pAlike_elements} says there is no leaf connecting an attracting fixed point to a repelling fixed point for a given p-A-like elements. Hence, all attracting fixed points of elements of $g$ are attracting fixed points of $h$. Applying the same argument to the attracting polygon of $h$, one concludes that $P_g$ is in fact the attracting polygon of $h$ as well. 

We showed the attracting polygon of $g$ and the attracting polygon of $h$ must coincide if $p$ is attracting. How about the repelling polygons? Note Theorem \ref{thm:pAlike_elements}(2) implies both the repelling polygon of $g$ and the repelling polygon of $h$ are contained in $\Lambda_2$. 

Case 1. Suppose $g$ (hence, also $h$) has at least three attracting fixed points. Since both $g$ and $h$ has a unique repelling fixed point between two adjacent vertices of $P_g$, the only way for the repelling polygons to be unlinked is that they coincide. Therefore, $\Fix_g = \Fix_h$. 

Case 2. Now suppose $g$ has only two attracting fixed points. We know that each connected components of $S^1 \setminus \Fix_g$ has exactly one repelling fixed point of $g$ and exactly one repelling fixed point of $h$. Let $I$ be a connected component of $S^1 \setminus \Fix_g$, $r_g$ the repelling fixed point of $g$ on $I$, $r_h$ the repelling fixed point of $h$ on $I$.  Let $H$ be the subgroup of $G$ generated by $g, h$. First note that every element of $H$ fixes the endpoints of $I$. From the previous arguments, we know that each element of $H$ is p-A-like and has exactly two attracting fixed points and two repelling fixed points. This implies that each element of $H$ has exactly one fixed point in $I$. Now we apply Theorem \ref{thm:Navas} by identifying $I$ with $\RR$. If $r_g \neq r_h$, $H$ has no global fixed point in $I$, and the theorem says that $H$ is abelian. But if $g$ and $h$ commute, then $r_g$ and $r_h$  coincide, a contradiction. Therefore, $r_g$ and $r_h$ must coincide from the beginning, and $\Fix_g = \Fix_h$ again. 
\end{proof} 

The following generalizes Lemma \ref{lem:pAlike_share_fixedpoints} for p-A-like elements that could be elliptic. We use $\Per_f$ to denote the set of all periodic points of a homeomorphism $f$.

\begin{lem} \label{lem:pA-like-share-periodicpoints} Let $g$ be a p-A-like element in a pseudo fibered group $G$. If $h\in G$ shares a periodic point $p$ with $g$, then $\Per_h = \Per_g$ and $h$ is also p-A-like.
\end{lem}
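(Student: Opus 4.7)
The plan is to reduce the statement to Lemma \ref{lem:pAlike_share_fixedpoints} by passing to suitable powers of $g$ and $h$. The key preliminary observation is that for any p-A-like element $f \in G$, if $N \geq 1$ is chosen so that $f^N$ is properly p-A-like, then $\Per_f = \Fix_{f^N}$. Indeed, the inclusion $\Fix_{f^N} \subseteq \Per_f$ is immediate. Conversely, if $q \in \Per_f$ then $f^M(q) = q$ for some $M \geq 1$, so $q \in \Fix_{f^{NM}} = \Fix_{(f^N)^M}$; but since $f^N$ has a positive, even number of fixed points alternating between attracting and repelling, in each complementary arc of $S^1 \setminus \Fix_{f^N}$ the map $f^N$ displaces points strictly toward the attracting endpoint of the arc, and therefore so does every positive iterate $(f^N)^M$. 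This forces $\Fix_{(f^N)^M} = \Fix_{f^N}$, proving $q \in \Fix_{f^N}$.

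With this in hand, I would pick $n \geq 1$ so that $g^n$ is properly p-A-like, which exists by the hypothesis that $g$ is p-A-like, and $m \geq 1$ so that $h^m(p) = p$, which exists since $p \in \Per_h$. The preliminary observation applied to $g$ tells us $p \in \Per_g = \Fix_{g^n}$, so $g^n$ and $h^m$ are two elements of $G$ sharing the fixed point $p$, and $g^n$ is properly p-A-like. Lemma \ref{lem:pAlike_share_fixedpoints} then applies to the pair $(g^n, h^m)$ and yields that $h^m$ is also properly p-A-like with $\Fix_{h^m} = \Fix_{g^n}$. In particular $h$ is p-A-like, and reapplying the preliminary observation to $h$ gives
\[
\Per_h \;=\; \Fix_{h^m} \;=\; \Fix_{g^n} \;=\; \Per_g,
\]
which is the remaining claim.

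The main obstacle is the preliminary observation characterizing $\Per_f$ as the fixed-point set of a properly p-A-like power. It is not explicitly recorded earlier in the paper and requires a short dynamical argument using that attracting and repelling fixed points of a properly p-A-like element strictly alternate along $S^1$, so that the sign of the displacement of $f^N$ is constant on each complementary arc and is inherited by all positive powers. Once this is in place, the reduction to Lemma \ref{lem:pAlike_share_fixedpoints} via the powers $g^n$ and $h^m$ is mechanical.
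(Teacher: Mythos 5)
Your proof is correct and follows essentially the same route as the paper's: pass to powers $g^n$ (properly p-A-like) and $h^m$ (fixing $p$), note that $\Per_g = \Fix_{g^n}$, apply Lemma \ref{lem:pAlike_share_fixedpoints} to the pair $(g^n, h^m)$, and translate back. The only difference is that you supply the short dynamical argument (constant sign of displacement on complementary arcs of $\Fix_{f^N}$, inherited by positive iterates) for the fact that periodic points of a properly p-A-like element are fixed, which the paper simply asserts.
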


\begin{proof} Take powers $g^n$ and $h^m$ such that $g^n$ is properly p-A-like and $p$ is fixed for $h^m$. Note that every periodic point of a properly p-A-like element is fixed. So $\Fix_{g^n} = \Per_{g^n} = \Per_g$. Applying Lemma \ref{lem:pAlike_share_fixedpoints}, we get that $\Fix_{g^n}=\Fix_{h^m}$ and that $h^m$ is properly p-A-like. So $h$ is p-A-like and we also have $\Per_h = \Fix_{h^m}$, that agrees with $\Fix_{g^m} = \Per_g$.    
\end{proof}

The previous lemma establishes ``half" of the dynamical alternative.  The other half follows from the next lemma.

\begin{lem} 
\label{lem:purely_hyperbolic_subgroup} 
Let $G$ be a pseudo-fibered group. Suppose $g, h$ are hyperbolic elements of $G$ which share a fixed point $p$. Then every element of the subgroup generated by $g, h$ has the same fixed points as $g$. 
\end{lem}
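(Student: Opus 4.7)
The plan is to apply Solodov's theorem (Theorem \ref{thm:Navas}) to $H := \langle g, h \rangle$, viewed as a subgroup of $\Homeop(\RR)$ via the identification $\RR \cong S^1 \setminus \{p\}$. Let $q_g$ denote the other fixed point of $g$. First I would show that every nontrivial element $w \in H$ is hyperbolic. Since $g$ and $h$ both fix $p$, so does $w$, making $p$ a (periodic) fixed point of $w$. If $w$ were p-A-like, Lemma \ref{lem:pA-like-share-periodicpoints} applied to $w$ and $g$ would force $g$ to also be p-A-like, contradicting that $g$ is hyperbolic. Theorem \ref{thm:pAlike_elements}, combined with the remark that in a torsion-free pseudo-fibered group all M\"obius-like elements are hyperbolic, then leaves hyperbolicity as the only possibility for $w$.

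It follows that each nontrivial $w \in H$ has exactly one fixed point in $\RR$, namely its second $S^1$-fixed point. I would then split into two cases based on whether $H$ has a global fixed point on $\RR$. If $H$ fixes some $r \in \RR$, then $r$ is fixed by $g$, so $r = q_g$; every $w \in H$ then has $\Fix_w = \{p, q_g\} = \Fix_g$, as desired.

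If instead $H$ has no global fixed point on $\RR$, Theorem \ref{thm:Navas} yields that $[H, H] \setminus \{\Id\}$ consists of fixed-point-free elements. But every nontrivial element of $H$ is hyperbolic and hence fixes exactly one point of $\RR$, forcing $[H, H] = \{\Id\}$. So $H$ is abelian, every $w \in H$ commutes with $g$, and hence $w$ permutes $\Fix_g = \{p, q_g\}$. Since $w$ fixes $p$ and is orientation-preserving, $w$ must also fix $q_g$, so again $\Fix_w = \{p, q_g\} = \Fix_g$.

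The main obstacle is verifying the hypotheses of Solodov's theorem, which hinges on the ``every nontrivial element of $H$ is hyperbolic'' step. This in turn depends critically on Lemma \ref{lem:pA-like-share-periodicpoints} to transfer the obstruction from p-A-like $w$ to $g$ via the common fixed point $p$. Once that is in hand the rest of the argument closely mirrors Case 2 of the proof of Lemma \ref{lem:pAlike_share_fixedpoints}.
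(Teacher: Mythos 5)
Your proposal is correct and follows essentially the same route as the paper's proof: rule out p-A-like elements in $\langle g,h\rangle$ via the shared fixed point $p$ (the paper cites Lemma \ref{lem:pAlike_share_fixedpoints} where you cite its generalization, Lemma \ref{lem:pA-like-share-periodicpoints}), then apply Solodov's theorem on $S^1\setminus\{p\}\cong\RR$ exactly as in Case 2 of Lemma \ref{lem:pAlike_share_fixedpoints}. Your write-up is in fact somewhat more explicit than the paper's about how the conclusion of Solodov's theorem forces abelianness and hence a global fixed point.
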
 
\begin{proof} Since $p$ is fixed by $g, h, g^{-1}, h^{-1}$, any element of the subgroup $H$ of $G$ generated by $g, h$ fixes $p$. But Lemma \ref{lem:pAlike_share_fixedpoints} says no p-A-like element shares a fixed point of a hyperbolic element. Hence, all elements of such a subgroup must be hyperbolic. 

Now we apply Theorem \ref{thm:Navas} to $H$ by identifying $S^1 \setminus \{p\}$ with $\RR$. Just as in Case 2 of the proof of Lemma \ref{lem:pAlike_share_fixedpoints}, we conclude that $H$ has a (unique) global fixed point in $S^1 \setminus \{p\}$. 
\end{proof} 

Combining Lemmas \ref{lem:pA-like-share-periodicpoints} and \ref{lem:purely_hyperbolic_subgroup} with Theorem \ref{thm:pAlike_elements}, we immediately conclude Theorem \ref{thm:dynamics}. \qed

%%%%%%%
\subsection{Proof of Theorem \ref{thm:main1}}
\label{subsec:Tits}
We now combine Theorem \ref{thm:dynamics} with two known results to prove Theorem \ref{thm:main1}.  The first result is a very well-known tool in geometric group theory (for instance, see Ch. II.B of \cite{HarpeGGT}):

\begin{thm}[Ping-pong lemma] 
\label{thm:pingpong} 
Let $G$ be a group acting on a set $X$. Let $g_1$, $g_2$ be elements of $G$. Suppose there exist disjoint nonempty subsets
 $X_1^+, X_1^-, X_2^+, X_2^-$ of $X$ such that 
$g_i(X - X_i^-) \subset X_i^+, g_i^{-1}(X - X_i^+) \subset X_i^-$ for each $i = 1, 2$.  
Then the subgroup generated by $g_1, g_2$ is free. \qed 
\end{thm}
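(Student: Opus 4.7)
The plan is to show directly that every nonempty reduced word in $g_1, g_2$ acts nontrivially on $X$, by tracking how the letters shuffle among the four sets $X_1^\pm, X_2^\pm$. Write a reduced word as $w = g_{i_n}^{\epsilon_n}\cdots g_{i_1}^{\epsilon_1}$ with $i_k\in\{1,2\}$, $i_{k+1}\neq i_k$, and each $\epsilon_k$ a nonzero integer, and let $s_k\in\{+,-\}$ denote the sign of $\epsilon_k$. Since conjugation in $G$ does not affect whether $w=1$, I would first cyclically reduce $w$: after conjugation, either (a) the first and last letters involve different generators, or (b) $w$ collapses to a pure power $g_i^m$ of a single generator. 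This case split is the only real subtlety; the rest is bookkeeping with the inclusions.

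In case (a), assume without loss of generality that $i_n=1$ and $i_1=2$. Pick a starting point $x\in X_1^{-s_n}$ (nonempty by hypothesis) and apply the letters of $w$ from right to left, tracking at each stage which of the four sets contains the image. Since $X_1^{-s_n}$ is disjoint from both $X_2^+$ and $X_2^-$, in particular from $X_2^{-s_1}$, the inclusion $g_2^{\epsilon_1}(X - X_2^{-s_1})\subset X_2^{s_1}$ (which follows from iterating the two given containments) places $g_2^{\epsilon_1}(x)\in X_2^{s_1}$. The alternation $i_{k+1}\neq i_k$ together with the pairwise disjointness of all four sets makes the ping-pong hypothesis apply at every subsequent step, so inductively the image after $k$ letters lies in $X_{i_k}^{s_k}$. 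In particular $w(x)\in X_1^{s_n}$, a set disjoint from $X_1^{-s_n}$, so $w(x)\neq x$ and thus $w\neq 1$ in $G$.

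In case (b), write $w=g_i^m$ with $m\neq 0$, and assume $m>0$ (the other sign is symmetric). Pick $x\in X_j^+$ with $j\neq i$. Then $x\notin X_i^-$ by disjointness, and the inclusion $g_i(X-X_i^-)\subset X_i^+$, combined with $X_i^+\subset X-X_i^-$ (which gives $g_i(X_i^+)\subset X_i^+$), iteratively yields $g_i^m(x)\in X_i^+$, a set disjoint from $X_j^+$. Hence $w(x)\neq x$ and $w\neq 1$. Since no nonempty reduced word acts trivially on $X$, the subgroup $\langle g_1,g_2\rangle$ is free on these generators, completing the proof.
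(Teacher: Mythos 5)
Your proof is correct: the ping-pong argument via tracking reduced words through the four disjoint sets, with the cyclic-reduction step to handle words beginning and ending in the same generator, is exactly the standard proof (as in Ch.\ II.B of de la Harpe). The paper itself gives no proof, citing this as a well-known tool, so there is nothing to compare beyond noting that your argument is the classical one and is complete.
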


A proof of the second result we need can be found in many places, e.g.\ \cite{Ghys01} or \cite{Navasbook}:

\begin{thm}[H\"older]
Let $K$ be a subgroup of $\Homeop(\RR)$ which acts freely on $\RR$.  Then $K$ is abelian. \qed
\end{thm}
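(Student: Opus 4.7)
The plan is to equip $K$ with a compatible left-invariant linear order, show the order is Archimedean, and then embed $K$ order-preservingly into $(\RR,+)$ via a translation-number construction. Since $(\RR,+)$ is abelian, this yields the claim.

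First, I would put a bi-invariant linear order on $K$ as follows. Because $K$ acts freely, for each nontrivial $g \in K$ the function $x \mapsto g(x) - x$ is continuous and nowhere zero, so it has constant sign. Declare $g$ to be \emph{positive} if $g(x) > x$ for all $x$, and set $g < h$ iff $hg^{-1}$ is positive. One checks that this is equivalent to $g(x) < h(x)$ for every (equivalently, some) $x \in \RR$, and that the positive cone is closed under products, making $K$ a linearly ordered group.

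Next I would verify that this order is Archimedean: if $g, h$ are positive, then some power $g^n$ exceeds $h$. Fixing $x_0 \in \RR$, the sequence $g^n(x_0)$ is strictly increasing; if it were bounded above, its supremum would be a fixed point of $g$, contradicting freeness. Hence $g^n(x_0) \to +\infty$, and in particular $g^n(x_0) > h(x_0)$ for some $n$, so $g^n > h$ in the order.

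Finally I would construct the embedding into $(\RR,+)$. Fix a positive element $g_0 \in K$ and a basepoint $x_0$. By the Archimedean property, for each $g \in K$ and each positive integer $n$ there is a unique integer $p_n(g)$ with
\[
g_0^{\,p_n(g)}(x_0) \;\le\; g^n(x_0) \;<\; g_0^{\,p_n(g)+1}(x_0).
\]
Using that distinct orbit points of $g_0$ are separated in a monotone way, one shows that the sequence $p_n(g)$ is quasi-additive in $n$ (a Fekete-type subadditivity argument), so $\tau(g) := \lim_{n\to\infty} p_n(g)/n$ exists. A similar bookkeeping argument, comparing $(gh)^n(x_0)$ with $g^n h^n(x_0)$ up to a bounded error coming from the non-commutation of $g$ and $h$ in this single coordinate, gives $\tau(gh) = \tau(g) + \tau(h)$. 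Injectivity follows because $\tau(g) = 0$ forces $g^n(x_0)$ to stay within a bounded window of $x_0$, whose accumulation points would then be fixed by $g$, contradicting free action unless $g$ is trivial. The main technical obstacle is this last step: establishing that $\tau$ is a well-defined homomorphism and that vanishing of $\tau(g)$ really forces $g = \mathrm{id}$; everything else is routine once the order has been set up.
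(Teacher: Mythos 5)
The paper does not actually prove this statement --- it is quoted with a \qed and references to \cite{Ghys01} and \cite{Navasbook} --- so your proposal should be measured against the standard proof found in those sources, and your framework is exactly that proof's skeleton: the bi-invariant order coming from the constant sign of $g(x)-x$ (freeness plus the intermediate value theorem), the equivalence of one-point and all-point comparisons, the Archimedean property (a bounded increasing orbit $g^n(x_0)$ would have its supremum fixed by $g$), and the translation number $\tau(g)=\lim_n p_n(g)/n$, whose existence via Fekete is correct because the all-point comparisons give $p_{n+m}(g)\in\{p_n(g)+p_m(g),\,p_n(g)+p_m(g)+1\}$ with no commutation needed. All of that is sound as you set it up.

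The genuine gap is at the one step you wave at: additivity of $\tau$. The claim that $(gh)^n(x_0)$ and $g^nh^n(x_0)$ agree ``up to a bounded error coming from the non-commutation of $g$ and $h$'' is unjustified, and it is essentially the entire content of H\"older's theorem. In a bi-ordered group, $(gh)^n$ differs from $g^nh^n$ by a product of $\binom{n}{2}$ conjugates of the commutator $[g,h]$; for instance in the bi-orderable, nonabelian Heisenberg group, where $c=[g,h]$ is central, one has $(gh)^n=g^nh^n\,c^{-\binom{n}{2}}$, an error that grows without bound. What the comparison argument gives for free is only $|\tau(gh)-\tau(g)-\tau(h)|\le 2$, i.e.\ that $\tau$ is a homogeneous quasimorphism of bounded defect --- and that is not enough, since nonabelian groups (free groups, say) carry homogeneous quasimorphisms that are not homomorphisms; the defect cannot be killed by bookkeeping alone, and any proof that it vanishes must reuse freeness or the Archimedean property. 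The standard repairs are: prove commutativity first, \`a la H\"older, by deriving a contradiction with the Archimedean property from a nontrivial positive commutator (this is the route in \cite{Navasbook} and \cite{Ghys01}); or split into cases according to whether the orbit of $x_0$ is discrete (then the positive cone has a minimal element $u$, intercalation gives $g=u^k$, and $K$ is cyclic) or accumulates (then additivity follows by a density argument). Separately, your injectivity step has a smaller slip: $\tau(g)=0$ does not force $g^n(x_0)$ to stay in a bounded window, since $p_n(g)=o(n)$ is compatible with unbounded sublinear escape. The correct argument is that if $g$ is positive and nontrivial, the Archimedean property gives $g^m \succ g_0$ for some $m$, hence $p_{mk}(g)\ge k$ and $\tau(g)\ge 1/m>0$; this fix is routine, unlike the additivity gap, which is the missing heart of the proof.
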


Now the ping-pong lemma implies the first case of the Tits alternative:

\begin{lem} \label{lem:disjoint-periodics} Let $G$ be a pseudo-fibered group and $g_1,g_2\in G$. If $\Per_{g_1}$ and $\Per_{g_2}$ are disjoint, then there are powers of $g_1$ and $g_2$ that generate a non-abelian free subgroup of $G$.
\end{lem}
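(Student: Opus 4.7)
The plan is to apply the ping-pong lemma (Theorem \ref{thm:pingpong}) with $X = S^1$ to appropriate powers of $g_1$ and $g_2$. The two key inputs are Theorem \ref{thm:pAlike_elements}, which constrains the dynamics of each $g_i$, and the hypothesis $\Per_{g_1} \cap \Per_{g_2} = \emptyset$.

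First I would replace each $g_i$ by a positive power so that $g_i$ is either hyperbolic or properly pseudo-Anosov-like. This is allowed by Theorem \ref{thm:pAlike_elements} (recall that in a torsion-free pseudo-fibered group every M\"obius-like element is hyperbolic) and it does not change $\Per_{g_i}$. In either case, $g_i$ has a finite nonempty set $A_i \subset \Per_{g_i}$ of attracting fixed points and a finite nonempty set $R_i \subset \Per_{g_i}$ of repelling fixed points. Since $\Per_{g_1} \cap \Per_{g_2} = \emptyset$, the four sets $A_1, R_1, A_2, R_2$ are pairwise disjoint, so I can choose pairwise disjoint open subsets $X_1^+ \supset A_1$, $X_1^- \supset R_1$, $X_2^+ \supset A_2$, $X_2^- \supset R_2$ of $S^1$.

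The main technical step is then to show that for each $i$ there exists a further power $n_i$ with
\[
g_i^{n_i}\bigl(S^1 \setminus X_i^-\bigr) \subset X_i^+ \quad \text{and} \quad g_i^{-n_i}\bigl(S^1 \setminus X_i^+\bigr) \subset X_i^-.
\]
For a hyperbolic element this is the classical north-south dynamics. For a properly p-A-like element one argues arc by arc on the partition of $S^1$ cut out by $\Fix_{g_i}$: on each open arc bounded by an adjacent pair consisting of one attracting and one repelling fixed point, $g_i$ acts without interior fixed points and uniformly pushes compact subsets away from the repelling endpoint into arbitrarily small neighbourhoods of the attracting endpoint. Since $S^1 \setminus X_i^-$ is compact and meets only finitely many of these arcs, one uniformizes over the arcs to extract a single $n_i$ that works.

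Once the uniform attraction is in hand, Theorem \ref{thm:pingpong} applied to $g_1^{n_1}$ and $g_2^{n_2}$ immediately produces a non-abelian free subgroup of $G$. I expect the only step requiring any real care is the uniform attraction estimate, and even that is routine given the restricted list of possible dynamics of $g_i$ provided by Theorem \ref{thm:pAlike_elements}; the conceptual work has already been done in establishing Theorem \ref{thm:dynamics}.
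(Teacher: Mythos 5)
Your proposal is correct and follows essentially the same route as the paper: pass to powers that are hyperbolic or properly p-A-like, choose disjoint neighborhoods of the (now disjoint) attracting and repelling fixed-point sets, take high enough powers to get the ping-pong inclusions, and apply Theorem \ref{thm:pingpong}. The paper simply asserts the uniform attraction step ``because of the dynamics of hyperbolic and properly p-A-like elements,'' whereas you sketch the arc-by-arc justification explicitly; that is the only difference.
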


\begin{proof} We can replace $g_1$ and $g_2$ by some powers that satisfy $\Per_{g_i} = \Fix_{g_i}$. (If $g_i$ is hyperbolic, or properly p-A-like, no power needs to be taken. If $g_i$ is elliptic p-A-like, take a power that is properly p-A-like). Take $X_i^+$ to be a neighborhood of the attracting fixed points of $g_i$ and $X_i^-$ to be a neighborhood of the repelling fixed points of $g_i$ (for $i=1,2$). Since $\Fix_{g_1}$ and $\Fix_{g_2}$ are disjoint by hypothesis, we can take $X_i^+$, $X_i^-$ (for $i=1,2$) to be all disjoint.
Now let $h_i$ be a high enough power of $g_i$ so that $h_i (S^1 - X_i^-) \subset X_i^+$ and $h_i^{-1}(X - X_i^+) \subset X_i^-$, for $i=1,2$. This is possible because of the dynamics of hyperbolic and properly p-A-like elements. By the Ping-Pong lemma, $h_1$ and $h_2$ generate a free subgroup.  
\end{proof}

H\"older's theorem implies the alternative case:

\begin{lem} \label{lem:same-periodics} Let $G$ be a pseudo-fibered group, $g\in G$ and $P = \Per_g$. Consider the subgroup $H = \{ h\in G : hP = P\}$ of $G$ that leaves $P$ invariant. Then:
\begin{enumerate}
\item $H = \{h\in G : \Per_h = P \}$
\item $H$ is virtually abelian.
\end{enumerate}
\end{lem}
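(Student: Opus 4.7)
The plan is to exploit the finiteness of $P$ together with Theorem \ref{thm:dynamics}. First observe that $P = \Per_g$ is always a nonempty finite set: by Theorem \ref{thm:pAlike_elements} $g$ is hyperbolic or p-A-like, and in either case $\Per_g$ is either $\Fix_g$ (two points, hyperbolic case) or $\Fix_{g^n}$ for a properly p-A-like power $g^n$ (a finite even number of points). This finiteness is the linchpin of the whole argument.

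For (1), the inclusion $\{h \in G : \Per_h = P\} \subseteq H$ is immediate since any homeomorphism permutes its own periodic set. Conversely, for nontrivial $h \in H$, the permutation action of $h$ on the finite set $P$ means that some positive power of $h$ fixes $P$ pointwise, so $P \subseteq \Per_h$. Since $P = \Per_g$ is nonempty, $\Per_g \cap \Per_h \neq \emptyset$, and Theorem \ref{thm:dynamics} forces $\Per_h = \Per_g = P$.

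For (2), define $H_0 = \{h \in H : h|_P = \mathrm{id}\}$, the kernel of the permutation action $H \to \mathrm{Sym}(P)$. Since $P$ is finite, $H_0$ has finite index in $H$, so it suffices to show $H_0$ is abelian. For nontrivial $h \in H_0$, part (1) gives $\Per_h = P$, which combined with $P \subseteq \Fix_h$ yields $\Fix_h = P$. Now pick any connected component $I$ of $S^1 \setminus P$; every element of $H_0$, being orientation-preserving and fixing both endpoints of $I$, preserves $I$. The induced action on $I$ is \emph{free}, since a fixed point of a nontrivial $h$ inside $I$ would lie in $\Fix_h \setminus P = \emptyset$. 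Identifying $I$ with $\RR$ and applying H\"older's theorem, the image of $H_0$ in $\Homeop(I)$ is abelian; and the restriction map $H_0 \to \Homeop(I)$ is injective, because $h \neq 1$ with $h|_I = \mathrm{id}$ would force $\Fix_h \supseteq I$, contradicting $|\Fix_h| = |P| < \infty$. Hence $H_0$, and thus $H$, is virtually abelian.

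I do not anticipate a serious obstacle: the substantive dynamical content has already been absorbed into Theorem \ref{thm:dynamics}, and what remains reduces to a standard free action on $\RR$ plus H\"older's theorem. The only detail requiring minor care is the status of the identity (which sits trivially in $H$ but has $\Per_1 = S^1 \neq P$ when $g \neq 1$); this is harmless provided statement (1) is read as applying to nontrivial elements, which is all that is needed for statement (2).
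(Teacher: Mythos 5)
Your proposal is correct and follows essentially the same route as the paper: deduce $P\subseteq\Per_h$ from $hP=P$ and finiteness of $P$, invoke Theorem \ref{thm:dynamics} for part (1), then pass to the finite-index pointwise stabilizer of $P$ and apply H\"older's theorem to its free action on a component of $S^1\setminus P$. Your version is slightly more careful than the paper's on two minor points the paper glosses over (injectivity of the restriction map to a single complementary interval, and the role of the identity element), but these are refinements of the same argument, not a different one.
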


\begin{proof}
Let $h\in H$. Since $hP=P$ and $P=\Per_g$ is finite, then $P\subset \Per_h$. Then by Theorem \ref{thm:dynamics} we get that $\Per_h = \Per_g =P$. This proves the first assertion, the other inclusion being trivial.

For the second assertion, let $K \unlhd H$ consist of elements that stabilize $\Per_g$ pointwise.  That is,
$$K = \{ h \in H \mid \Fix_h = \Per_g \}.$$
Note
$$[H:K] \leq \frac{|\Per_g|}{2},$$
so in particular, to show $H$ is virtually abelian, it suffices to show $K$ is abelian.  Assertion (1) implies $K$ acts freely on each component of $S^1 \setminus \Per_g$, each of which is an interval.  So by H\"{o}lder's theorem, $K$ is abelian.
\end{proof}

Theorem \ref{thm:main1} now follows from Theorem \ref{thm:dynamics}, since if $H$ is a subgroup of $G$, then either there exist two elements of $H$ with distinct periodic point sets, or $H$ has a global set of periodic points.  In the first case, apply Lemma \ref{lem:disjoint-periodics}; in the second case, apply Lemma \ref{lem:same-periodics}.
\qed

%%%%%%
\subsection{Remarks on the proofs of Theorems \ref{thm:main1} and \ref{thm:dynamics}}\label{ss:remarks}
Conjecturally, non-elementary pseudo-fibered groups are word-hyperbolic. For word-hyperbolic groups, a stronger version of the Tits alternative holds: an infinite subgroup of a word-hyperbolic group either contains a free group of rank 2 or is virtually \emph{cyclic}. To obtain this stronger Tits alternative, one needs to strengthen Lemma \ref{lem:same-periodics}. Let $H$ be a subgroup as in Lemma \ref{lem:same-periodics}, and assume it is actually abelian. Then Ghys \cite{Ghys01} provides an $H$-invariant measure on each connected component of $S^1-\Per_H$. The problem is that this measure may not have full support. Even when the action of $G$ on $S^1$ is minimal, it is still not clear if it can be shown that we have an invariant measure of full support on the complement of $\Per_H$. The stronger Tits alternative would easily follow from there.

More directly, it is easy to verify that this stronger Tits alternative is equivalent to showing that the subgroup $K$ in the proof of Lemma 4.12 is isomorphic to $\mathbb{Z}$.  A stronger version of H\"{o}lder's theorem says that for any abelian subgroup of $\Homeop(\RR)$, there is a blowdown of $\RR$ such that the induced action of the subgroup is faithful and by translations.  Thus, showing $K$ is $\mathbb{Z}$ is equivalent to showing that this translation action is not minimal.

Finally, we remark that if we knew that pseudo-Anosov-like elements really were pseudo-Anosov, the stronger Tits alternative would follow from the fact that two pseudo-Anosovs commute if and only if they are powers of some other pseudo-Anosov. This can be seen by considering the action of the mapping class group on Thurston's compactification of Teichm\"{u}ller space: if two pseudo-Anosovs commute, they must fix the same axis. The mapping class group acts discretely on Teichm\"{u}ller space, hence, the subgroup generated by the two pseudo-Anosovs must act discretely on the axis in Teichm\"{u}ller space. Each pseudo-Anosov acts by translations on this axis, so we conclude that the subgroup the two generate is isomorphic to $\mathbb{Z}$.

%%%%%%%%%%%%%%%%%%%%%%%%%%%%%%%%%%%
%%%%%%%%%%%%%%%%%%%%%%%%%%%%%%%%%%%
\section{Purely hyperbolic Pseudo-fibered groups and convergence group actions on $S^2$}
\label{sec:purelyhyperbolic}
We now restrict our attention to the special class of purely
hyperbolic pseudo-fibered groups.  In \cite{BaikFuchsian}, it was
conjectured that such groups are always Fuchsian, or, equivalently,
convergence subgroups of $\Homeop(S^1)$. While this conjecture remains
open, Theorem \ref{thm:main2} shows that, as expected, purely
hyperbolic pseudo-fibered groups act on the 2-sphere as convergence
groups.  We remark that a similar idea of relating group action on $S^1$ with a
geometric origin to convergence group action on $S^2$ has been carried
out in the context of pseudo-Anosov flows in Fenley's program (see
Section 4 of \cite{Fenley12}). 

%In the next subsection, we discuss a possible generalization
%of Theorem \ref{thm:main2} to tame mixed pseudo-fibered groups. 

To prove Theorem \ref{thm:main2}, we begin by reviewing the construction of \cite{BaikFuchsian}, which is responsible for the existence of an $S^2$ on which a pseudo-fibered group can act, and inspired by results of Cannon and Thurston \cite{CannonThurston}.  Moore's theorem \cite{Moore25} implies that for any pseudo-fibered triple $(G, \Lambda_1, \Lambda_2)$, there exists a quotient map $\pi: S^1 \to S^2$, constructed by first identifying two disks laminated by $\Lambda_1$ and $\Lambda_2$ (respectively) along their common boundary $S^1$, and then collapsing all the gaps of the $\Lambda_i$ to points.  Since each lamination is $G$-invariant, this induces a $G$-action on $S^2$ such that $\pi$ is $G$-equivariant.  We call this map $\pi$ the \emph{Cannon-Thurston map} for the pseudofibered triple $(G, \Lambda_1,\Lambda_2)$.  For details, one can also consult Section 14 of \cite{CannonThurston}.  A basic observation about this construction is

\begin{lem} 
\label{lem:lifting_convergent_seq} 
 Let $(G, \Lambda_1, \Lambda_2)$ be a pseudo-fibered triple. Suppose there exists a sequence $(\x_i)$ of points in $S^2 (=\pi(S^1))$ which converges to $\x$, and a sequence $(g_i)$ of elements of $G$ such that $g_i(\x_i)$ converges to $\x'$ in $S^2$. Then, passing to subsequences if necessary, there exists a sequence $(x_i)$ of points in $S^1$ converging to $x$ such that $g_i(x_i)$ converges to $x'$ in $S^1$, where $\x_i= \pi(x_i)$ and $\x' = \pi(x')$.
\end{lem}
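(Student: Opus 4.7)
The plan is a straightforward double application of sequential compactness of $S^1$, combined with continuity and $G$-equivariance of the Cannon--Thurston map $\pi: S^1 \to S^2$. First, since $\pi$ is a continuous surjection, I choose for each $i$ an arbitrary lift $x_i \in \pi^{-1}(\x_i) \subset S^1$ (nonempty by surjectivity of $\pi$, and in fact finite because the very full laminations $\Lambda_1$, $\Lambda_2$ have only finite-sided ideal polygon gaps, so only finitely many endpoints of $S^1$ are collapsed together under $\pi$). Because $S^1$ is compact, I pass to a subsequence of the indices along which $x_i \to x$ for some $x \in S^1$, and continuity of $\pi$ then forces $\pi(x) = \lim \pi(x_i) = \lim \x_i = \x$, as desired.

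Second, consider $(g_i(x_i))$ in $S^1$. Compactness again lets me pass to a further subsequence so that $g_i(x_i) \to x'$ for some $x' \in S^1$. Since the $G$-action on $S^2$ is defined as the quotient of the $G$-action on $S^1$, the map $\pi$ is $G$-equivariant, so
\[
\pi(x') \;=\; \lim_{i\to\infty} \pi(g_i(x_i)) \;=\; \lim_{i\to\infty} g_i(\pi(x_i)) \;=\; \lim_{i\to\infty} g_i(\x_i) \;=\; \x'.
\]
Together with $x_i \to x$ and $\pi(x_i) = \x_i$, this is exactly the required conclusion, and the two subsequence extractions are exactly what the ``passing to subsequences if necessary'' clause in the statement permits.

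There is essentially no obstacle here: the content is simply that convergent sequences on the quotient can be realized by convergent sequences upstairs, which holds abstractly for any continuous equivariant surjection from a compact metric space. The lemma is stated as a ``basic observation'' because all of the real work is in the construction of $\pi$ in \cite{BaikFuchsian} via Moore's theorem; once one knows that $\pi$ exists, is continuous, and is $G$-equivariant, the lifting statement is automatic.
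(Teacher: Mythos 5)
Your proof is correct and is exactly the argument the paper has in mind: the paper's own proof is the one-line remark that the claim is straightforward from compactness of $S^1$ together with continuity, surjectivity, and $G$-equivariance of $\pi$, and you have simply written out the two subsequence extractions explicitly. (The parenthetical claim that fibers of $\pi$ are finite is not needed for the argument and can be dropped.)
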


\begin{proof}
This is straightforward because $S^1$ is compact, and $\pi$ is continuous, surjective and $G$-equivariant. 
\end{proof} 

We now state a few dynamical lemmas, after which we will prove Theorem \ref{thm:main2}.

\begin{lem}
\label{lem:nested_neighborhood} 
Let $G$ be a group acting on $S^1$ such that there exists a $G$-invariant lamination with a rainbow at $p \in S^1$.  Suppose there exists a sequence $(g_i)$ of elements of $G$ such that for any neighborhood $U$ of $p$, $g_i(U)$ intersects $U$ nontrivially for all large $i$. Then $p$ is an accumulation point of some fixed points of the elements in the sequence $(g_i)$. 
\end{lem}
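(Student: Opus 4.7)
The plan is to use the rainbow at $p$ to build nested neighborhoods shrinking to $\{p\}$, then combine the $G$-invariance of $\Lambda$ with the overlap hypothesis to extract a fixed point of some $g_i$ from each neighborhood; a diagonal argument then produces a sequence of fixed points converging to $p$. Concretely, write the rainbow as $(l_n) = ((a_n, b_n))$, let $U_n$ be the open arc bounded by $\{a_n, b_n\}$ containing $p$, and set $V_n = S^1 \setminus \overline{U_n}$. Then $U_n \supseteq U_{n+1} \supseteq \cdots$ and $\bigcap_n \overline{U_n} = \{p\}$. By hypothesis, for each $n$ there is $N(n)$ with $g_i(U_n) \cap U_n \neq \emptyset$ for all $i \geq N(n)$, and we may take $N$ strictly increasing.

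Fix $n$ and $i \geq N(n)$. Since $g_i(l_n)$ is another leaf of $\Lambda$ and two leaves of a lamination are unlinked, the endpoints $g_i(a_n), g_i(b_n)$ lie jointly in $\overline{U_n}$ or jointly in $\overline{V_n}$. I claim that, combined with the overlap $g_i(U_n) \cap U_n \neq \emptyset$, this forces the closed arcs $g_i(\overline{U_n})$ and $\overline{U_n}$ to be nested, i.e.\ one contains the other. Granting the claim, the one-dimensional intermediate value theorem applied to $g_i$ (if $g_i(\overline{U_n}) \subseteq \overline{U_n}$) or to $g_i^{-1}$ (if $g_i(\overline{U_n}) \supseteq \overline{U_n}$) on the arc $\overline{U_n}$ yields a fixed point $p_{n,i} \in \overline{U_n}$ of $g_i$. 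Choosing $i_n \geq N(n)$ strictly increasing and setting $p_n = p_{n, i_n}$ gives $p_n \in \overline{U_n}$ with $p_n \to p$, exhibiting $p$ as an accumulation point of $\bigcup_i \Fix(g_i)$.

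The main obstacle is proving the nesting claim in full generality. The easy subcase is when $g_i(a_n), g_i(b_n) \in \overline{V_n}$: the arc $g_i(\overline{U_n})$ has both endpoints outside $U_n$, so it either lies entirely in $\overline{V_n}$ (excluded by the overlap hypothesis) or contains $\overline{U_n}$. The subtle subcase is when $g_i(a_n), g_i(b_n) \in \overline{U_n}$ yet $g_i(\overline{U_n})$ is the ``long'' arc wrapping through $\overline{V_n}$, equivalently $g_i(p) \in \overline{V_n}$. I would rule this out by invoking the continuity of the specific homeomorphism $g_i$ at $p$: since $a_n, b_n \to p$ we have $g_i(a_n), g_i(b_n) \to g_i(p)$, so if $g_i(p) \in \overline{V_n}$ then for all sufficiently large $n$ we would have $g_i(a_n), g_i(b_n) \notin \overline{U_n}$, contradicting the subcase hypothesis. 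Because $n_0(i)$---the threshold beyond which this wrap-around is impossible---is finite for each $i$, one can coordinate the diagonal selection of $i_n$ with $n$ (picking $n$ large relative to $i_n$) to avoid the wrap-around, which is the subtle coordination step that must be carried out carefully.
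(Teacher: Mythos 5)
Your setup (the rainbow leaves $l_n$, the arcs $U_n$ shrinking to $p$, the unlinking of $l_n$ with $g_i(l_n)$, and the intermediate value theorem on a closed arc) is the right skeleton, and your treatment of the case $g_i(a_n),g_i(b_n)\in\overline{V_n}$ is correct. But the wrap-around subcase is where the difficulty actually lives, and your resolution of it has two genuine problems. First, the claimed equivalence between ``$g_i(\overline{U_n})$ is the long arc'' and ``$g_i(p)\in\overline{V_n}$'' is false: the long arc from $g_i(a_n)$ to $g_i(b_n)$ consists of all of $\overline{V_n}$ together with two subarcs of $\overline{U_n}$, so $g_i(p)$ can land in $U_n$ while the image arc still wraps. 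Second, and more seriously, the index coordination is self-defeating. The continuity argument shows that for fixed $i$ with $g_i(p)\neq p$ and $n$ large, $g_i(\overline{U_n})$ is a tiny arc near $g_i(p)$ \emph{disjoint} from $\overline{U_n}$ --- so for exactly those pairs $(n,i)$ the overlap $g_i(U_n)\cap U_n\neq\emptyset$ fails and there is nothing to extract. The hypothesis only guarantees overlap for $i\ge N(n)$, i.e.\ $i$ large relative to $n$, which is the opposite regime; you cannot in general satisfy both $i\ge N(n)$ and $n\ge n_0(i)$ at once.

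The wrap-around case does not need to be excluded; it can be handled directly. If $g_i(a_n),g_i(b_n)\in\overline{U_n}$ but $g_i(\overline{U_n})$ is the long arc, then $g_i(\overline{U_n})\supseteq\overline{V_n}$, hence $g_i^{-1}(\overline{V_n})\subseteq\overline{U_n}$. Any fixed point $x$ of $g_i$ lying in $\overline{V_n}$ then satisfies $x=g_i^{-1}(x)\in\overline{U_n}$, so $x\in\{a_n,b_n\}$; therefore $\Fix_{g_i}\subseteq\overline{U_n}$, and as long as $g_i$ has at least one fixed point you again find one in $\overline{U_n}$. (In the situations where the lemma is applied the relevant elements are hyperbolic or properly pseudo-Anosov-like, so nonempty fixed-point sets are automatic; for an arbitrary group acting with an invariant lamination this caveat should be recorded.) With this replacement for your nesting claim, your diagonal selection of $i_n\ge N(n)$ goes through. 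Two smaller points: pass to a subsequence of the rainbow so that the $U_n$ are genuinely nested (the definition only gives $a_n,b_n\to p$ from opposite sides), and note that the paper does not prove this lemma internally but cites Proposition 7.5 of \cite{BaikFuchsian}, so there is no in-text argument to compare against.
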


\begin{proof}
See the proof of Proposition 7.5 in \cite{BaikFuchsian}. 
\end{proof}

\begin{lem}
\label{lem:accumulation_point}
 Let $(G, \Lambda_1, \Lambda_2)$ be a pseudo-fibered triple. 
 Suppose $(x_i)$ is a sequence of points in $S^1$ which converges to $x$, and there exists a sequence $(g_i)$ of elements of $G$ such that $g_i(x_i)$ converges to $x'$. Then either $x$ is an accumulation points of fixed points of the sequence $(g_{i+1}^{-1} \circ g_i)$ or $x'$ is an accumulation point of fixed points of the sequence $(g_{i+1} \circ g_i^{-1})$. Moreover, if $x_i = x$ for all $i$, then $x'$ must be an accumulation point of fixed points of (any subsequence of) the sequence $(g_{i+1} \circ g_{i}^{-1})$. 
\end{lem}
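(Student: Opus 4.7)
The plan is to apply Lemma~\ref{lem:nested_neighborhood} at $x$ for the sequence $\alpha_i := g_{i+1}^{-1}\circ g_i$, or at $x'$ for the sequence $\beta_i := g_{i+1}\circ g_i^{-1}$. With $y_i := g_i(x_i)$ (so $y_i \to x'$), the identities
\[
\alpha_i(x_i) = g_{i+1}^{-1}(y_i), \qquad \beta_i(y_i) = g_{i+1}(x_i)
\]
translate the nested-neighborhood hypothesis of Lemma~\ref{lem:nested_neighborhood} at $x$ (respectively $x'$) into the requirement that $g_{i+1}^{-1}(y_i) \in U$ (respectively $g_{i+1}(x_i) \in V$) for every neighborhood $U$ of $x$ (respectively $V$ of $x'$), for large $i$. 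Since $\Lambda_1$ and $\Lambda_2$ have disjoint endpoint sets, each point of $S^1$ fails to be an endpoint of at least one of them, and so by the Rainbow Lemma both $x$ and $x'$ admit rainbows in some $G$-invariant lamination. Thus Lemma~\ref{lem:nested_neighborhood} is applicable at either point.

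For the ``moreover'' clause, assume $x_i = x$ for all $i$. Then $\beta_i(y_i) = g_{i+1}(g_i^{-1}(y_i)) = g_{i+1}(x) = y_{i+1}$, so for any rainbow neighborhood $V$ of $x'$ both $y_i$ and $\beta_i(y_i) = y_{i+1}$ lie in $V$ for all large $i$, giving $V \cap \beta_i(V) \ni y_{i+1}$. Lemma~\ref{lem:nested_neighborhood} then yields that $x'$ is an accumulation point of fixed points of $(\beta_i)$; since the computation depends only on the identity $\beta_i(y_i) = y_{i+1}$, it applies verbatim to every subsequence.

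For the general case, the plan is to establish the dichotomy: for every pair of rainbow neighborhoods $U \ni x$ and $V \ni x'$, at least one of $g_{i+1}^{-1}(y_i) \in U$ or $g_{i+1}(x_i) \in V$ holds for infinitely many $i$. A diagonal argument over a countable basis of shrinking $U_n$ and $V_n$ then extracts a single subsequence for which the hypothesis of Lemma~\ref{lem:nested_neighborhood} is satisfied at one of $x$ or $x'$, giving the desired accumulation of fixed points of that subsequence. To prove the dichotomy, I would analyze the image arc $g_{i+1}([x_i,x_{i+1}])$ running from $g_{i+1}(x_i)$ to $y_{i+1}$, exploiting orientation-preservation of $g_{i+1}$: if its length tends to $0$ along a subsequence, then $g_{i+1}(x_i) \to y_{i+1} \to x'$, placing $g_{i+1}(x_i)$ in $V$; if its length stays bounded below, then the map $g_{i+1}^{-1}$ compresses a small arc near $y_{i+1}$ (eventually containing $y_i$) onto a small arc near $x_{i+1}$, forcing $g_{i+1}^{-1}(y_i)$ into $U$.

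The main obstacle will be handling the bounded-below case rigorously, in particular when $x_i, x_{i+1}$ approach $x$ from opposite sides (and symmetrically for $y_i, y_{i+1}$ at $x'$), since then the ``short arc between $x_i$ and $x_{i+1}$'' is ambiguous and the matching of sides under $g_{i+1}$ must be tracked carefully. Passing to further subsequences to fix the sides of approach at both $x$ and $x'$, and using the orientation-preserving monotonicity of $g_{i+1}$ with $g_{i+1}(x_{i+1}) = y_{i+1}$ as a pivot to transfer the short-arc/long-arc alternative across $g_{i+1}$, should permit the required case analysis. Some additional care is needed so that the passages to subsequences remain compatible with the asymmetric indexing ($i$ versus $i+1$) in $\alpha_i$ and $\beta_i$.
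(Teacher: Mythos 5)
Your reduction to Lemma~\ref{lem:nested_neighborhood} is set up correctly: the identities $\alpha_i(x_i)=g_{i+1}^{-1}(y_i)$ and $\beta_i(y_i)=g_{i+1}(x_i)$, the observation that disjointness of $\Ends(\Lambda_1)$ and $\Ends(\Lambda_2)$ plus the Rainbow Lemma guarantees a rainbow at each of $x$ and $x'$, and the treatment of the ``moreover'' clause (where $\beta_i(y_i)=y_{i+1}$ makes the overlap hypothesis automatic) are all fine. The paper itself only cites \cite{BaikFuchsian} here, so you are filling in an omitted argument; the problem is that the heart of it --- the dichotomy in the general case --- is left as a plan, and the plan as described cannot work. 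The dichotomy you propose (``for every pair of rainbow neighborhoods $U\ni x$, $V\ni x'$, either $g_{i+1}^{-1}(y_i)\in U$ or $g_{i+1}(x_i)\in V$ for infinitely many $i$'') is a statement about points and orientation-preserving homeomorphisms alone, with no reference to the laminations, and in that generality it is \emph{false}. Concretely: take $x_i\downarrow x$ from one side and $y_i\uparrow x'$ from one side, and for each $i$ choose $g_{i+1}$ sending the short arc from $x_{i+1}$ to $x_i$ onto an arc emanating from $y_{i+1}$ \emph{away} from $y_i$ and exiting $V$; then $g_{i+1}(x_i)\notin V$, while $y_i$ lies in the image of the long complementary arc, so $g_{i+1}^{-1}(y_i)$ can be placed anywhere in that long arc, in particular outside $U$. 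These three point-constraints on $g_{i+1}$ are order-compatible, can be imposed for every $i$ simultaneously, and leave enough freedom to make every $\alpha_i$ and $\beta_i$ fixed-point free. This is exactly the ``local orders disagree'' configuration you flag as the main obstacle, but it is not cured by fixing sides of approach or by monotonicity: it persists for all $i$, so no amount of passing to subsequences rescues an argument that uses only orientation and arc lengths.

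What is missing is a second, essential use of the invariant laminations, beyond the final appeal to Lemma~\ref{lem:nested_neighborhood}. Fix a rainbow leaf $\ell$ at $x$ bounding a small neighborhood $U$, and a rainbow leaf $m$ at $x'$ bounding $V$. The leaves $g_i(\ell)$ and $g_{i+1}(\ell)$ lie in the \emph{same} $G$-invariant lamination, hence are unlinked, so the arcs $g_i(U)$ and $g_{i+1}(U)$ (each of which meets $V$, since $y_i\in g_i(U)$ and $y_{i+1}\in g_{i+1}(U)$) are forced to be nested, disjoint, or to cover $S^1$; the same holds for $g_i^{-1}(V)$ and $g_{i+1}^{-1}(V)$ via the leaves $g_i^{-1}(m)$, $g_{i+1}^{-1}(m)$. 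It is this unlinkedness constraint --- not orientation-preservation --- that rules out the configuration above and yields $g_{i+1}^{-1}g_i(U)\cap U\neq\emptyset$ or $g_{i+1}g_i^{-1}(V)\cap V\neq\emptyset$ (note that the arc version of the overlap condition suffices for Lemma~\ref{lem:nested_neighborhood}; you do not need the stronger pointwise conclusion $g_{i+1}^{-1}(y_i)\in U$). Your worry about subsequences versus the consecutive indexing is, by contrast, not a real problem: once $\alpha_i(U)\cap U\neq\emptyset$ holds along a subsequence of the already-formed maps $\alpha_i=g_{i+1}^{-1}g_i$, Lemma~\ref{lem:nested_neighborhood} applied to that subsequence produces fixed points of maps belonging to the original sequence $(g_{i+1}^{-1}\circ g_i)$, which is all the statement requires.
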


\begin{proof} 
This is a straightforward consequence of Lemma \ref{lem:nested_neighborhood}. See, for instance, the proof of Proposition 7.6 in \cite{BaikFuchsian}.
\end{proof} 

\begin{proof}[Proof of Theorem \ref{thm:main2}]
Suppose the negation of the conclusion. Then there exists an infinite sequence of distinct elements $(g_i)$ of $G$ which does not have the convergence property, \ie, the set $\{g_i\}$ does not act properly discontinuously on the set of triples of distinct points of $S^2$. More precisely, this means that, after passing to a subsequence of $(g_i)$, there exist three convergent sequences in $S^2$
$$\x_i \to \x, \ \ \y_i \to \y, \ \ \z_i \to \z, \ \ \x_i\neq \y_i \neq \z_i \neq \x_i, \ \ \x \neq \y \neq \z \neq \x,$$
and three elements $\x',\y',\z' \in S^2$ such that
$$g_i(\x_i) \to \x', \ \ g_i(\y_i) \to \y', \ \ g_i(\z_i) \to \z', \ \ \x'\neq \y' \neq \z'\neq \x'.$$
We conclude that there exist sequences and points in $S^1$ such that
$$x_i \to x, \ \ y_i \to y, \ \ z_i \to z, \ \ x_i \neq y_i \neq z_i \neq x_i, \ \ x \neq y \neq z \neq x$$
and three elements $x',y',z' \in S^1$ such that
$$g_i(x_i) \to x', \ \ g_i(y_i) \to y', \ \ g_i(z_i)\to z', \ \ x'\neq y' \neq z' \neq x',$$
where in our notation, $p$ is some fixed point in the preimage of $\overline{p}$ under $\pi$ for any $\overline{p} \in S^2$, in accordance with Lemma \ref{lem:lifting_convergent_seq}.  In words, we can lift sequences exhibiting the failure of $G$ to act as a convergence group on $S^2$ to sequences exhibiting the failure of $G$ to act as a convergence group on $S^1$.

Since $\Lambda_1$ and $\Lambda_2$ do not share any endpoints, for each $p \in \{x, y, z\}$, there exists a rainbow at $p$ in at least one of the $\Lambda_i$.  In particular, for each $p \in \{x,y,z\}$ there exists a leaf $L_p$ which separates $p$ from the other two points in $\{x, y, z\} \setminus \{p\}$ (which lamination $L_p$ belongs to is not important, and $L_x, L_y, L_z$ are not necessarily leaves of the same lamination).  Passing to a subsequence, we may assume that each of the sequences of pairs of points described by the endpoints of the leaves $(g_i(L_x))$, $(g_i(L_y))$, $(g_i(L_z))$, $(g_i^{-1}(L_x))$, $(g_i^{-1}(L_y))$, $(g_i^{-1}(L_z))$ converges to a pair of points, which are possibly not distinct.

By Lemma \ref{lem:accumulation_point}, either at least two of $x,y,$ and $z$ are accumulation points of fixed points of the sequence $(g_{i+1}^{-1} \circ g_i)$ or at least two of $x',y',$ and $z'$ are accumulation points of fixed points of the sequence $(g_{i+1} \circ g_i^{-1})$. Without loss of generality, we assume that $x'$ and $y'$ are accumulation points of fixed points of the sequence $(g_{i+1} \circ g_i^{-1})$, possibly after exchanging the roles of $g_i$ and $g_i^{-1}$.  Furthermore, since each element of $G$ has exactly two fixed points, there exists a subsequence $(g_{{i_j}+1} \circ g_{i_j}^{-1})$ of the sequence $(g_{i+1} \circ g_i^{-1})$ such that $x'$ and $y'$ are the \emph{only} accumulation points of the fixed points of the $g_{{i_j}+1} \circ g_{i_j}^{-1}$. By the second statement of  Lemma \ref{lem:accumulation_point}, if $p\in S^1$ is such that $g_i(p)$ converges to $p'$, then $p'$ must be either $x'$ or $y'$. In particular, considering the sequence $(g_i(L_z))$, which converges to some pair of possibly nondistinct points $\{e_1, e_2\}$, what we have shown implies each $e_i$ is either $x'$ or $y'$. 

If $e_1 \neq e_2$, since laminations are required to be closed, the limit of the sequence of leaves $(g_i(L_z))$) must be a leaf connecting $x'$ to $y'$ in the lamination $\Lambda_i$ containing $L_z$.  But since $\x'=\pi(x')$ and $\y'=\pi(y')$ are assumed to be distinct, by the definition of the Cannon-Thurston map $\pi$, their preimages cannot be connected by a leaf.  This contradiction implies $e_1 = e_2$.

Let's assume that $e_1 = e_2 = x'$.  We will show that $x'$ is not distinct from both $y'$ and $z'$.  In the case $e_1 = e_2 = y'$, the same argument would lead us to contradict that $y'$ is not distinct from both $x'$ and $z'$.

Let $I_y$ be the closure of the connected component of $S^1 - L_z$ which contains $y$, and define $I_z$ similarly for $z$.  Take a nested sequence of closed neighborhoods $(U_i)$ of $x'$ such that $U_i \to x'$ as $i \to \infty$. Passing to a subsequence, one may assume that the endpoints of $g_i(L_z)$ are contained in $U_i$.
 
For each $i$, there are two possibilities: either $g_i(I_z) \subset U_i$, or $g_i(I_y) \subset U_i$.  Suppose the former happens for infinitely many $i$.  Since $(z_i)$ converges to $z$, then for all large enough $i$, $z_i$ is in $I_z$. Hence, $g_i(z_i) \in U_i$ for infinitely many $i$, so some subsequence of $g_i(z_i)$ converges to $x'$.  But this is impossible since $z'$ is assumed to be distinct from $x'$, and $g_i(z_i)$ converges to $z'$. If instead $g_i(I_y) \subset U_i$ for infinitely many $i$, then we similarly contradict the assumption that $y' \neq x'$.
\end{proof}

We remark that this proof almost goes through to show that a purely
hyperbolic pseudo-fibered group $G$ acts as a convergence group on the
circle $S^1 = \pi^{-1}(S^2)$.  The only gap to consider is the case
where $e_1 \neq e_2$.

We conclude the paper with some questions which naturally arise from the results of this paper. 
\begin{ques} For fibered groups, how do hyperbolic elements and pseudo-Anosov-like elements interact? What can be say about the group structure of a pseudo-fibered group in terms of the dynamical feature of the elements of the group? 
\end{ques} 
\begin{ques} \label{ques:uniformconvergence}
For a fibered group action on $S^1$, without using hyperbolic geometry, can we abstractly show that the induced action on $S^2$ as in this section is a uniform convergence group? 
\end{ques} 

Answering (or rather understanding) above questions appropriately, one might hope to get a characterization of fibered hyperbolic 3-manifold groups via its action on $S^1$ with invariant laminations. 
For instance, assuming Cannon's conjecture \cite{CannonConjecture}, an
affirmative answer to Question \ref{ques:uniformconvergence} together
with a result of Bowditch \cite{Bowditch98} would imply that such a
group is always a closed hyperbolic 3-manifold group. 

\bibliographystyle{abbrv}
\bibliography{biblio}
\end{document}